\definecolor{ddred}{rgb}{0,0,0}  
\definecolor{dred}{rgb}{0,0,0}  
\definecolor{mred}{rgb}{0, 0, 0}
\definecolor{dgreen}{rgb}{0,0.75,0}
\definecolor{mgreen}{rgb}{0,0.75,0}
\definecolor{dblue}{rgb}{0,0,0.75}
\definecolor{dbblue}{rgb}{0,0,0}
\definecolor{mblue}{rgb}{0,0,0}
\definecolor{dyellow}{rgb}{0.5,0.5,0}
\definecolor{dmagenta}{rgb}{0.75,0,0.75}
\definecolor{dcyan}{rgb}{0,0.5,0.5}
\def\tmred{\textcolor{mred}}
\def\tddred{\textcolor{ddred}}
\def\tdred{\textcolor{dred}}
\numberwithin{equation}{section}
\theoremstyle{plain}
\newtheorem{thm}{Theorem}[section]
\theoremstyle{remark}
\newtheorem{rem}{Remark}[section]
\theoremstyle{plain}
\newtheorem{lem}{Lemma}[section]
\theoremstyle{plain}
\newtheorem{prop}{Proposition}[section]
\begin{document}

\begin{frontmatter}
\title{Universal Scheme for Optimal Search and Stop}
\runtitle{Universal Scheme for Optimal Search and Stop}

\begin{aug}
\author{\fnms{Sirin} \snm{Nitinawarat}
\thanksref{a}
\ead[label=e1]{sirin.nitinawarat@gmail.com}}
\and
\author{\fnms{Venugopal V.} \snm{Veeravalli}
\thanksref{b}
\ead[label=e2]{vvv@illinois.edu}}

\address[a]{Qualcomm Technologies, Inc.,
5775 Morehouse Drive,
San Diego, CA 92121,
USA.\\
\printead{e1}
}
\address[b]{Coordinated Science Laboratory,
Department of Electrical and Computer Engineering,
University of Illinois at Urbana-Champaign,
Urbana, IL, 61801,
USA.\\
\printead{e2}}


\runauthor{S. Nitinawarat and V.V. Veeravalli}

\affiliation{University of Illinois at Urbana-Champaign}

\end{aug}

\begin{abstract}
The problem of universal search and stop using an adaptive search policy is considered.  When the target location is searched, the observation is distributed according to the target distribution, otherwise it is distributed according to the absence distribution.  A universal sequential scheme for search and stop is proposed using only the knowledge of the absence distribution, and its asymptotic performance is analyzed.  The universal test is shown to yield a vanishing error probability, and to achieve the {\em optimal} reliability when the target is present, universally for every target distribution.  Consequently, it is established that the knowledge of the target distribution is only useful for improving the reliability for detecting a missing target.  It is also shown that a multiplicative gain for the search reliability equal to the number of searched locations is achieved by allowing adaptivity in the search.  
\end{abstract}

\begin{keyword}
\kwd{search and stop}
\kwd{sequential design of experiments}
\kwd{sequential hypothesis testing}
\end{keyword}

\end{frontmatter}

\section{Introduction}
\label{sec-intro}

We study the problem of universal search and stop using an adaptive search policy.  When the target location is searched, the observation is assumed to be distributed according to the target distribution, otherwise it is distributed according to the absence distribution.  We assume that only the absence distribution is known, and the target distribution can be arbitrarily distinct from the absence distribution.  An adaptive search policy specifies the current search location based on the past observations and past search locations.  At the stopping time, 
the target's location is determined or it is decided that it is missing.
The overall goal is to achieve a certain level of accuracy for the final decision using the fewest number of observations.  The results in this paper should be regarded as a contribution to the long-studied area of search theory (see, e.g., \citep{ahls-wege-book-1987, alpe-gal-book-2003, benk-mont-weis-navres-1991, chud-chud-book-1989, koop-book-1980, ston-book-2004}), in particular, searching for a stationary target in discrete time and space with a discrete search effort (cf. \citep{benk-mont-weis-navres-1991}[Subsection 4.2]).

Conceptually, a desirable goal of the search at each location should be to \tddred{determine} if the target is there.  To this end, a universal sequential test for two hypotheses can be used at each location to collect multiple subsequent observations that will eventually lead to a binary outcome that the target is there or not.  To improve reliability for this binary decision at a particular search location, one can use a test that takes more observations at that location.  If we insist on using the mentioned sequential binary test at each location as an ``inner'' test, then it is convenient to select the current search location based on the past binary outcomes of the subsequent binary tests (instead of all the past observational outcomes of all the searches, generally taken multiple times at each of the locations).  With this imposition, the search and stop problem can be conceptually reduced to the problem of constructing an ``outer'' test for the sequential design of such inner experiments.  This intuitive decomposition leads to our proposed universal sequential test for search and stop.

Universal sequential testing for two hypotheses was first considered for certain parametric families of distributions for continuous observation spaces in \citep{cher-berksym-1961, cher-amstat-1965, lai-astat-1988, schw-amstat-1962}, the latest of which employed the concept of time-dependent thresholding.  Here in Subsection \ref{subsec-univ-SebBinTest}, we look at a non-parametric family of distributions for a {\em finite} observation space, for which we propose a universal test using a suitable time-dependent threshold and analyze its performance.  

Sequential design of experiments with a uniform experimental cost was first considered in \citep{cher-amstat-1959, bess-tech-repo-1960} under a certain positivity assumption for the model, which was successfully dispensed with later in \citep{niti-atia-veer-ieeetac-2013, nagh-javi-astat-2013}.  A generalization of the model with a more complicated memory structure for the experimental outcomes and with non-uniform experimental cost was studied in \citep{niti-veer-sqa-2014}.

We show that when the target is present, the proposed universal test based on the aforementioned decomposition yields a vanishing error probability, and achieves the {\em optimal} reliability, in terms of a suitable exponent for the error probability, universally for every target distribution.  Consequently, \tddred{we establish that} the knowledge of the target distribution is only useful for improving \tddred{the} reliability for detecting a missing target.  We also show that a multiplicative gain for the search reliability equal to the number of searched locations is achieved by allowing adaptivity in the search.  

We review the pertinent existing results on universal sequential testing for two hypotheses and sequential design of experiments in Subsections \ref{subsec-univ-SebBinTest} and \ref{subsec-SeqDesExpCost}, respectively.  The general model for universal search and stop is set up in Section \ref{sec-model}.  We present the proposed sequential test for search and stop and state the main result pertaining to its performance in Section \ref{sec-test-and-performance}.

\section{Preliminaries}
\label{sec-prelim}

Throughout the paper, random variables (rvs) are denoted by capital letters, and their realizations are denoted by the corresponding lower-case letters. All rvs are assumed to take values in finite sets, and all logarithms are the natural ones.  \tmred{For a finite set $\mathcal{X},$ and a probability mass function (pmf) $p$ on $\mathcal{X}$ we write $X \sim p$ to denote that the rv $X$ is distributed according to $p.$}

The following technical facts will be useful; their derivations can be found in \citep[Chapter 11]{cove-thom-eit-book-2006}.  Consider random variables $Y^{n} = \left( Y_1, \ldots, Y_n \right)$ which are independent and identically distributed  (i.i.d.) according to a pmf $p$ on $\mathcal{Y},$ i.e., $Y_i \sim p,\ i = 1, \ldots, n.$  Let $y^{n} = \left( y_1, \ldots, y_n \right) \in \mathcal{Y}^{n}$ be a sequence with an empirical distribution $\gamma = \gamma^{(n)}$ on $\mathcal{Y}.$  It follows that the probability of such sequence $y^{n},$ under the i.i.d. assumption according to the pmf $p$, is 
\begin{align} 
p(y^{n}) \ =\  e^{ -n \, \left [ D(\gamma \| p) + H(\gamma) \right ]}, 
\label{eqn-prelim-fact1}
\end{align}
where $D(\gamma \| p)$ and $H(\gamma)$ are the relative entropy of $\gamma$ and $p$, and entropy of $\gamma$, defined as
\begin{align}
D(\gamma \| p) \ \triangleq \ \sum_{y \in \mathcal{Y}} \gamma(y) \log \frac{\gamma(y)}{p(y)}, \nonumber
\end{align}
and
\begin{align}
H(\gamma) \ \triangleq \ - \sum_{y \in \mathcal{Y}} \gamma(y) \log \gamma(y), \nonumber
\end{align}
respectively.
Consequently, it holds that for each $y^n$, the pmf $p$ that maximizes $\,p(y^{n})\,$ is $\,p=\gamma$, and the associated maximal probability of $y^{n}$ is 
\begin{align} 
\gamma(y^{n}) \ = \ e^{\left [ -nH(\gamma) \right ]}.
\label{eqn-prelim-fact2}
\end{align}
Next, for each $n \geq 1,$ the number of all possible empirical distributions from a sequence of length $n$ in $\mathcal{Y}^n$ is upper bounded by $\left( n + 1 \right)^{\vert \mathcal{Y} \vert}.$  In particular, using this last fact, it can be shown that for any $\epsilon > 0,$ it holds that the probability of the i.i.d. sequence $Y^n$ under $p$ satisfies
\begin{align}
\mathbb{P} \left [ D \left( \gamma \| p \right) \geq \epsilon \right ]
\ \leq\  {\left( n + 1 \right)^{\vert \mathcal{Y} \vert}} e^{-n \epsilon}.
\label{eqn-prelim-fact3}
\end{align}

We now review the relevant preliminary results on universal sequential testing for two hypotheses, and model-based sequential design of experiments with varying experimental cost in Subsections \ref{subsec-univ-SebBinTest} and \ref{subsec-SeqDesExpCost}, respectively.  These results will be key to our proposed universal test for search and stop.

\subsection{Universal Sequential Testing for Two Hypotheses}
\label{subsec-univ-SebBinTest} 

Consider sequential testing between the null hypothesis $H_0$ with i.i.d. observations $Y_k \in \mathcal{Y},\  k = 1, 2, \ldots,$ according to a pmf $\pi$ on $\mathcal{Y},$ and  the alternative hypothesis $H_1$ with i.i.d. $Y_k,\ k = 1, 2, \ldots,$ according to a \tddred{pmf} $\mu \neq \pi.$  We assume that only $\pi$ is known, and nothing is known about $\mu,$ i.e., it can be arbitrarily close to $\pi$.  We further assume that both $\mu$ and $\pi$ have full support on $\mathcal{Y}$.

For a threshold parameter $a > 1,$ we shall employ a sequential test defined in terms of the following (Markov) time:
\begin{align}
\tilde{N}^b \ \triangleq\  
\mathop{\rm{argmin}}_{n \geq 1}~  
\left [ n D \left( \gamma \| \pi \right) > 
\left( \log{a} + n^{\frac{2}{3}} +  \vert \mathcal{Y} \vert \log{(n+1)} \right) \right ],
\label{eqn-time-upperthres-bin-test}
\end{align}
where $\gamma$ denotes the empirical distribution of the observation sequence\\ $\left( y_1, \ldots, y_n \right).$  The test stops at this time or $\lfloor a \left( \log{a} \right)^{\rho_1} \rfloor$ for some $\rho_1 > 1,$ depending on which one is smaller, i.e., it stops at time $N^b,$ where
\begin{align}
N^b \ \triangleq\ \min \left( \tilde{N}^b, \lfloor a \left( \log{a} \right)^{\rho_1} \rfloor \right).
\label{eqn-stoptime-bin-test}
\end{align}
Correspondingly, the final decision is made according to 
\begin{align}
\delta_b \left( Y^{N^b} \right) = 
\left \{
\begin{array}{cc}
1 &\mbox{if}\ \tilde{N}^b \leq a \left( \log{a} \right)^{\rho_1}\\
0 &\mbox{if}\ \tilde{N}^b > a \left( \log{a} \right)^{\rho_1}.
\end{array}
\right.
\label{eqn-decision-bin-test}
\end{align}

\begin{lem}  
\label{lem-1}
With $\mu$ and $\pi$ having full support on $\mathcal{Y}$, for every $a > 1,$ the sequential test in (\ref{eqn-time-upperthres-bin-test}), (\ref{eqn-stoptime-bin-test}), (\ref{eqn-decision-bin-test}) yields that
\begin{align}
\alpha_{a}
~\triangleq~ \mathbb{P}_0 \left [ \delta_b \left( Y^{N^b} \right) \ =\ 1 \right ] 
~\leq~ \frac{1}{a}.
\label{eqn-falsealarm-USPRT}
\end{align}
In addition, for any $\nu < 1,$ $\mu \neq \pi$ and every 
$a >  a^* \left(\nu, \mu, \pi \right)$, the test also yields that\\
\tddred{
\begin{align}
\label{eqn-cost-USPRT}
{c}_{a} &~\triangleq~ \mathbb{E}_1 \left [ N^b \right ]
~\leq~\mathbb{E}_1 \left [\tilde{N}^b \right ] 
~\leq~  \frac{\log{a}}{\nu D \left( \mu \| \pi \right)}, \\
\label{eqn-cost-USPRTb}
{\kappa}_{a} &~\triangleq~ \mathbb{E}_0 \left [ N^b \right ] 
~\leq~  a \left( \log{a} \right)^{\rho_1},  \\
\beta_{a}
&~\triangleq~ 
\mathbb{P}_1 \left [ \delta_b \left( Y^{N^b} \right) = 0 \right ]
~=~ 
\mathbb{P}_1 \left [ \tilde{N}^b > a \left( \log{a} \right)^{\rho_1} \right ] 
\label{eqn-cost-USPRT-2}  \\
&~\leq~
\frac{\mathbb{E}_1 \left [ \tilde{N}^b \right ]}{a \left(\log{a}\right)^{\rho_1}}
\nonumber \\
&~\leq~ \frac{1}{\nu D \left( \mu \| \pi \right) a \left(\log{a}\right)^{\left( \rho_1 -1 \right)}}. 
\label{eqn-misdetect-USPRT}
\end{align}
}
\end{lem}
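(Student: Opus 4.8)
The plan is to establish the four bounds in order, treating \eqref{eqn-falsealarm-USPRT} first, then the cost bound \eqref{eqn-cost-USPRT}, and finally deriving \eqref{eqn-cost-USPRTb} and \eqref{eqn-misdetect-USPRT} as consequences. For the false-alarm bound, I would work under $H_0$, where the observations are i.i.d.\ $\sim \pi$. The event $\{\delta_b=1\}$ is $\{\tilde N^b \le a(\log a)^{\rho_1}\}$, so it suffices to bound $\mathbb{P}_0[\tilde N^b = n]$ for each $n$ and sum. On $\{\tilde N^b = n\}$ we have $nD(\gamma\|\pi) > \log a + n^{2/3} + |\mathcal{Y}|\log(n+1)$; using \eqref{eqn-prelim-fact1} the probability under $\pi$ of any length-$n$ string with empirical distribution $\gamma$ is $e^{-n[D(\gamma\|\pi)+H(\gamma)]} \le e^{-nH(\gamma)} a^{-1} e^{-n^{2/3}} (n+1)^{-|\mathcal{Y}|}$, and since the number of strings of type $\gamma$ is at most $e^{nH(\gamma)}$ while the number of types is at most $(n+1)^{|\mathcal{Y}|}$, a union bound gives $\mathbb{P}_0[nD(\gamma\|\pi) > \log a + n^{2/3} + |\mathcal{Y}|\log(n+1)] \le a^{-1} e^{-n^{2/3}}$. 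Summing over $n \ge 1$ and using $\sum_{n\ge 1} e^{-n^{2/3}} \le 1$ (which is where the $n^{2/3}$ inflation term is designed to make the series summable to something $\le 1$) yields $\alpha_a \le 1/a$.

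For the cost bound \eqref{eqn-cost-USPRT} I would work under $H_1$, where $Y_k \sim \mu$ with $\mu \ne \pi$ of full support, so $D(\mu\|\pi) > 0$. The key is that the empirical distribution $\gamma^{(n)}$ concentrates at $\mu$, so $D(\gamma^{(n)}\|\pi) \to D(\mu\|\pi)$ a.s., and hence the threshold inequality $nD(\gamma\|\pi) > \log a + n^{2/3} + |\mathcal{Y}|\log(n+1)$ is eventually satisfied. Quantitatively, fix $\nu' \in (\nu,1)$; by continuity of $D(\cdot\|\pi)$ and the law of large numbers there is a set of empirical distributions near $\mu$ on which $D(\gamma\|\pi) \ge \nu' D(\mu\|\pi)$, and on that set $\tilde N^b \le n$ as soon as $n \nu' D(\mu\|\pi) > \log a + n^{2/3} + |\mathcal{Y}|\log(n+1)$; since the right-hand side is $o(n)$, this holds for all $n \ge n_0 = n_0(\nu,\mu,\pi)$ with $n_0 \approx \frac{\log a}{\nu' D(\mu\|\pi)}$, and one checks $n_0 \le \frac{\log a}{\nu D(\mu\|\pi)}$ for $a$ large enough (this defines $a^*(\nu,\mu,\pi)$). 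To turn the high-probability statement into a bound on $\mathbb{E}_1[\tilde N^b]$ I would write $\mathbb{E}_1[\tilde N^b] \le n_0 + \sum_{n \ge n_0} \mathbb{P}_1[\tilde N^b > n]$ and bound the tail $\mathbb{P}_1[\tilde N^b > n]$ by the probability that the empirical distribution strays far enough from $\mu$ that $D(\gamma\|\pi) < \nu' D(\mu\|\pi)$; this is a large-deviations event of the type controlled by \eqref{eqn-prelim-fact3}, giving a geometrically (in fact faster) decaying tail whose sum is a bounded constant, absorbed into the slack between $\nu'$ and $\nu$. Then $\mathbb{E}_1[N^b] \le \mathbb{E}_1[\tilde N^b]$ is immediate from $N^b = \min(\tilde N^b, \lfloor a(\log a)^{\rho_1}\rfloor)$.

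The remaining two bounds are short. For \eqref{eqn-cost-USPRTb}, under $H_0$ we simply use $N^b \le \lfloor a(\log a)^{\rho_1}\rfloor \le a(\log a)^{\rho_1}$ deterministically, so $\kappa_a = \mathbb{E}_0[N^b] \le a(\log a)^{\rho_1}$. For the mis-detection bound \eqref{eqn-misdetect-USPRT}, the event $\{\delta_b = 0\}$ equals $\{\tilde N^b > a(\log a)^{\rho_1}\}$ by \eqref{eqn-decision-bin-test}, and Markov's inequality applied to the nonnegative rv $\tilde N^b$ under $H_1$ gives $\beta_a = \mathbb{P}_1[\tilde N^b > a(\log a)^{\rho_1}] \le \mathbb{E}_1[\tilde N^b]/(a(\log a)^{\rho_1})$; substituting the already-proven bound \eqref{eqn-cost-USPRT} on $\mathbb{E}_1[\tilde N^b]$ gives the final expression $\frac{1}{\nu D(\mu\|\pi)\, a(\log a)^{\rho_1-1}}$.

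The main obstacle is the cost bound: making the informal "the empirical distribution concentrates at $\mu$, so the threshold is crossed around time $\log a / D(\mu\|\pi)$" into a clean bound on $\mathbb{E}_1[\tilde N^b]$ with the stated constant requires carefully choosing the intermediate constant $\nu'$ and the cutoff $n_0$, and verifying that the contribution of the late-crossing tail — controlled via the type-counting estimate \eqref{eqn-prelim-fact3} applied to the region $\{D(\gamma\|\pi) < \nu' D(\mu\|\pi)\}$ — is genuinely negligible relative to $\log a$; the role of the time-varying correction terms $n^{2/3}$ and $|\mathcal{Y}|\log(n+1)$ is precisely to be $o(n)$ here while being large enough to drive the false-alarm series in the first step, and keeping track of both uses simultaneously is the delicate part.
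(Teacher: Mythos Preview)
Your argument is correct. The false-alarm bound \eqref{eqn-falsealarm-USPRT}, the deterministic bound \eqref{eqn-cost-USPRTb}, and the Markov-inequality derivation of \eqref{eqn-misdetect-USPRT} all match the paper's proof essentially line for line.

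For the cost bound \eqref{eqn-cost-USPRT}, however, you take a genuinely different route from the paper. The paper first establishes almost-sure convergence $\tilde N^b/\log a \to 1/D(\mu\|\pi)$ by sandwiching via the stopping definition, and then upgrades this to convergence in mean by proving uniform integrability of $\tilde N^b/\log a$; the uniform-integrability step is driven by a separate tail estimate (their Lemma~\ref{lem-3}) of the form $\mathbb{P}_1[\tilde N^b \ge n] \le a\, e^{-(n-1)2B(\mu,\pi)} n^{2|\mathcal Y|} e^{(n-1)^{2/3}}$, which in turn rests on the Bhattacharyya identity $2B(\mu,\pi)=\min_q\, D(q\|\mu)+D(q\|\pi)$. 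Your approach bypasses both the a.s.\ convergence and the Bhattacharyya machinery: you split $\mathbb{E}_1[\tilde N^b] \le n_0 + \sum_{n\ge n_0}\mathbb{P}_1[\tilde N^b>n]$ and control the tail by noting that $\{\tilde N^b>n\}\subset\{D(\gamma^{(n)}\|\pi)<\nu' D(\mu\|\pi)\}$ for $n\ge n_0$, which by continuity of $D(\cdot\|\pi)$ and compactness forces $D(\gamma^{(n)}\|\mu)\ge \epsilon$ for some fixed $\epsilon>0$, so that \eqref{eqn-prelim-fact3} applies directly. This is more elementary and gives exactly the upper bound the lemma asserts; the paper's route is heavier but yields the sharper two-sided statement $\mathbb{E}_1[\tilde N^b]/\log a \to 1/D(\mu\|\pi)$ rather than merely the one-sided bound by $1/(\nu D(\mu\|\pi))$. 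One small point worth making explicit in your write-up is the compactness step that converts $D(\gamma\|\pi)<\nu' D(\mu\|\pi)$ into $D(\gamma\|\mu)\ge \epsilon$; you allude to it but it is the hinge that lets \eqref{eqn-prelim-fact3} fire.
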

\tddred{The proof of Lemma \ref{lem-1} will be given in Section \ref{pf-lem1}.}

\subsection{Sequential Design of Experiments with Varying Experimental Cost}
\label{subsec-SeqDesExpCost}

Now we turn our attention to another sequential decision-making problem (a model-based one this time).  Consider the problem of sequential design of experiments to facilitate the eventual testing for $H$ hypotheses.  We assume a (conditionally) memoryless model for the outcome conditioned on the currently chosen experiment.  In particular, under the $i$-th hypothesis, $i \in \left \{1, \ldots, H \right \}= [H],$ and conditioned on the current experiment $u_t =u \in \mathcal{U},$ at time $t = 1, 2, \ldots,$ the current outcome of the experiment, denoted by $Z_t,$ is assumed to be conditionally independent of all past outcomes and past experiments $U^{t-1}, Z^{t-1},$ and to be conditionally distributed according to a pmf $p_i^u$ on $\mathcal{Z}.$  There is a cost function $c:[H] \times \mathcal{U} \rightarrow \mathbb{R}^+,$ and the current experiment $u_t$ is assumed to incur a cost of $c\left( i, u_t \right)$ under the $i$-th hypothesis.  \tdred{We assume that for every $i = 1, \ldots, H,\ u \in \mathcal{U},\ z \in \mathcal{Z},\ 
p_i^u \left( z \right) > 0,\ c \left( i, u \right) > 0.$}  A test consists of a adaptive policy $\phi$ that chooses each experiment as a suitable  (possibly randomized) function of past experiments and their outcomes, a stopping time $\tau$, and a final decision rule $\delta$ that outputs a guess of a hypothesis in $[H]$.  The goal is to design a test to optimize the tradeoff between the cost accumulated up to the final decision, \tddred{as measured by} $\sum\limits_{t = 1}^{\tau} c\left(i, U_t \right), i \in [H]$, and the accuracy of the final decision, \tddred{as measured by} $P_{\rm{max}} \triangleq \max\limits_{i=1, \ldots, H} \mathbb{P}_i \left [ \delta \left( Z^{\tau} \right) \neq i \right ]$.  The problem is model-based: all the (conditional) distributions $p_i^u, j \in [H], u \in \mathcal{U},$ and the cost function $c$ are assumed to be known. 

For each hypothesis $i \in [H],$ let
\begin{align}
q_i^*(u) \ \triangleq\ 
\mathop{\rm{argmax}}_{\tddred{q}}~ 
\frac{\min\limits_{j \neq i} \sum\limits_{u} q(u) D\left( p_i^u \| p_j^u \right)}
{\sum\limits_{u} q(u) c(i, u)}.
\label{eqn-ctrl-exploit-DesExp}
\end{align}
Then an asymptotically optimal test can be specified based on these distributions as follows.  At each time $t \geq 1,$ the ML estimate of the true hypothesis $\hat{i}$ can be computed based on past experiments and their outcomes $u^{t-1}, z^{t-1}$ using the model $p_i^u, i \in [H], u \in \mathcal{U}$ (ties are broken arbitrarily).  For $b > 0,$ during the sparse occasions $t = \lfloor e^{b k} \rfloor,\ k = 0, 1, \ldots$, the experiment is selected to explore all possible options in $\mathcal{U}$ in a round-robin manner independently of $\hat{i}$:
\tddred{for $\mathcal{U} = \left \{ u_1, \ldots, u_{\vert \mathcal{U} \vert} \right \},$
\begin{align}
u_t = u_{\left( k ~\mbox{mod}~ \vert \mathcal{U} \vert  \right) ~+~ 1}.
\label{eqn-ctrl-explore-DesExp}
\end{align}}
At all other times, the current (random) experiment is selected as $U_t \sim q^*_{\hat{i}}$.  Denote the joint distribution under the $i$-th hypothesis of all experiments and their outcomes up to time $t$ (induced by the control policy) by $p_i \left( z^t, u^t \right).$  For a threshold $a'> 1,$ the test stops at time $\tau^*$ and decides \tddred{in favor of} the ML hypothesis according to the rule $\delta^*$, where
\begin{align}
\tau^*  \ \triangleq\ \mathop{\rm{argmin}}_{t} 
\min\limits_{j \neq \hat{i}} \frac{p_{\hat{i}} \left( z^t, u^t \right)}{p_j \left( z^t, u^t \right)} \ >\ a',\  \
\delta^* \left( z^{\tau^*}, u^{\tau^*} \right) = \hat{i}.
\label{eqn-stop-DesExp}
\end{align}  
Note that as the $q^*_i,\ i \in [H],$ are, in general, not point-mass distributions, in addition to the realization of all experimental outcomes $z^t,$ we also need to account for the realization of the experiments $u^t$ as well in \tmred{the instantaneous compuation of the ML hypothesis and the stopping criterion (\ref{eqn-stop-DesExp}).  If the experiments have been chosen deterministically at all times, we can just use the joint distributions of all experimental outcomes $p_i \left( z^t \right),\ i \in [H],\ t = 1, 2, \ldots$ in these computations.}  
The resulting test is asymptotically optimal and its performance is characterized in Proposition \ref{prop-1} as follows.\newpage

\begin{prop}[\citep{niti-veer-sqa-2014}]\footnote{The result in \citep{niti-veer-sqa-2014} was proven for the model in which the cost function depends only on the experiment; however, the proof generalizes to the current setting when the cost function also depends on the hypothesis.}
\label{prop-1}
For $b> 0,$ in (\ref{eqn-ctrl-explore-DesExp}) chosen to be sufficiently small, and as $a' \rightarrow \infty,$ the test in (\ref{eqn-ctrl-exploit-DesExp}), (\ref{eqn-ctrl-explore-DesExp}), (\ref{eqn-stop-DesExp}) yields a vanishing error probability $P_{\rm{max}} \rightarrow 0,$ and satisfies for each $i = 1, \ldots, H,$ that
\begin{align}
\mathbb{E}_i \left [
\sum\limits_{t = 1}^{\tau^*} c\left(i, U_t \right) 
\right ]
\ =\ \frac{-\log{P_{\rm{max}}}}
{\max\limits_{q}~ 
\frac{\min\limits_{j \neq i} \sum\limits_{u} q(u) D\left( p_i^u \| p_j^u \right)}
{\sum\limits_{u} q(u) c(i, u)}}\left( 1+ o(1) \right).
\nonumber
\end{align}
In addition, the proposed test is asymptotically optimal in the sense that any sequence of tests $\left( \phi, \tau, \delta \right)$ that achieve $P_{\rm{max}} \rightarrow 0$ must \tddred{satisfy}
\begin{align}
\mathbb{E}_i \left [
\sum\limits_{t = 1}^{\tau} c\left(i, U_t \right) 
\right ]
\ \geq\ \frac{-\log{P_{\rm{max}}}}
{\max\limits_{q}~ 
\frac{\min\limits_{j \neq i} \sum\limits_{u} q(u) D\left( p_i^u \| p_j^u \right)}
{\sum\limits_{u} q(u) c(i, u)}}\left( 1+ o(1) \right),
\nonumber
\end{align}
\tddred{for every $i = 1, \ldots, H.$}
\end{prop}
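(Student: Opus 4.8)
The plan is to prove the two halves of Proposition~\ref{prop-1} separately --- the converse lower bound valid for every test driving $P_{\rm{max}}\to 0$, and the achievability of that exponent by the specific test in (\ref{eqn-ctrl-exploit-DesExp}), (\ref{eqn-ctrl-explore-DesExp}), (\ref{eqn-stop-DesExp}) --- both organized around the pairwise log-likelihood ratio (LLR) process $\Lambda_{ij}(t)\triangleq\log\frac{p_i(Z^t,U^t)}{p_j(Z^t,U^t)}$ and a change-of-measure argument. The starting observation is that, since the experiment-selection policy is a common (possibly randomized) function of the observed history, the conditional law of $U_t$ given the past is identical under every hypothesis, so the policy factors cancel and $\Lambda_{ij}(t)=\sum_{s=1}^t\log\frac{p_i^{U_s}(Z_s)}{p_j^{U_s}(Z_s)}$; in particular $e^{\Lambda_{ij}(t)}$ is a $\mathbb P_j$-martingale of mean one, and $\mathbb E_i[\log\tfrac{p_i^{U_s}(Z_s)}{p_j^{U_s}(Z_s)}\mid U_s,\mathcal F_{s-1}]=D(p_i^{U_s}\|p_j^{U_s})$. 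The generalization from the experiment-only cost $c(u)$ of \citep{niti-veer-sqa-2014} to the hypothesis-dependent cost $c(i,u)$ is harmless, since every estimate below involving hypothesis $i$ sees only $c(i,\cdot)$, and the definition of $q_i^*$ in (\ref{eqn-ctrl-exploit-DesExp}) and of the exponent in the statement already incorporate it; throughout, $D_i^*$ denotes the maximum value of the ratio appearing in (\ref{eqn-ctrl-exploit-DesExp}), i.e.\ the denominator of that exponent, which we assume strictly positive (identifiability; otherwise $P_{\rm{max}}\to 0$ is unattainable).

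For the converse, fix $i$ and a test with $P_{\rm{max}}\to 0$; we may assume $\mathbb E_i[\tau]<\infty$, since otherwise $\mathbb E_i[\sum_{t\le\tau}c(i,U_t)]\ge(\min_u c(i,u))\,\mathbb E_i[\tau]=\infty$ and there is nothing to prove. With $m_u\triangleq\mathbb E_i[N_u(\tau)]$ the expected number of uses of experiment $u$ up to $\tau$, Wald's identity --- applicable because $\mathcal Z$ is finite and the $p_i^u$ are bounded away from $0$, so the LLR increments are bounded --- gives $\mathbb E_i[\Lambda_{ij}(\tau)]=\sum_u m_u D(p_i^u\|p_j^u)$ for each $j\neq i$. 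Identifying $\Lambda_{ij}(\tau)$ with $\log\frac{d\mathbb P_i}{d\mathbb P_j}$ on the stopped field $\mathcal F_\tau$ and applying the data-processing inequality to the $\mathcal F_\tau$-measurable indicator $\mathbf 1\{\delta=i\}$ gives $\mathbb E_i[\Lambda_{ij}(\tau)]\ge d(\mathbb P_i(\delta=i)\,\|\,\mathbb P_j(\delta=i))\ge d(1-P_{\rm{max}}\,\|\,P_{\rm{max}})=-\log P_{\rm{max}}\,(1+o(1))$, where $d(\cdot\|\cdot)$ is binary relative entropy. Hence $\min_{j\neq i}\sum_u m_u D(p_i^u\|p_j^u)\ge-\log P_{\rm{max}}\,(1+o(1))$, and applying the defining extremal property of $D_i^*$ to the deterministic probability vector $q(u)=m_u/\sum_v m_v$ yields
\begin{align}
\mathbb E_i\Big[\textstyle\sum_{t=1}^\tau c(i,U_t)\Big]=\sum_u m_u c(i,u)\ \ge\ \frac{\min_{j\neq i}\sum_u m_u D(p_i^u\|p_j^u)}{D_i^*}\ \ge\ \frac{-\log P_{\rm{max}}}{D_i^*}\,(1+o(1)),
\nonumber
\end{align}
the asserted lower bound.

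For the achievability I would argue in three steps. (i) \emph{Self-correcting control.} Under $\mathbb P_i$, the round-robin schedule on the sparse epochs $t=\lfloor e^{bk}\rfloor$ samples every $u$ at least $\lfloor k/|\mathcal U|\rfloor$ times by the $k$-th epoch, so by the type bound (\ref{eqn-prelim-fact3}) the empirical conditional types concentrate on the true $p_i^u$; combining this with martingale concentration for the exploitation-phase LLR increments (which under $\mathbb P_i$ have nonnegative conditional drift and bounded range), one shows that for $b$ small enough --- this being the role of ``$b$ sufficiently small'' in the statement --- $\mathbb P_i(\hat i_t\neq i)$ decays fast enough in $t$ that the stabilization time $T_i\triangleq\sup\{t:\hat i_t\neq i\}$ satisfies $\mathbb E_i[T_i]<\infty$. (ii) \emph{Linear growth after stabilization.} For $t>T_i$ we have $\hat i_t=i$, so $U_t\sim q_i^*$ off the density-zero exploration epochs; by the strong law $\Lambda_{ij}(t)=t\big(\sum_u q_i^*(u)D(p_i^u\|p_j^u)\big)(1+o(1))$ and the accrued cost equals $t\big(\sum_u q_i^*(u)c(i,u)\big)(1+o(1))$, and since $q_i^*$ attains the max in (\ref{eqn-ctrl-exploit-DesExp}) we have $\min_{j\neq i}\sum_u q_i^*(u)D(p_i^u\|p_j^u)=D_i^*\sum_u q_i^*(u)c(i,u)$, so the running minimum $\min_{j\neq i}\Lambda_{ij}(t)$ increases at rate $D_i^*$ times the cost rate, while the exploration epochs add only $O(\log t)$ to the cost. (iii) \emph{Stopping time and error.} The rule (\ref{eqn-stop-DesExp}) halts the first time $\min_{j\neq\hat i_t}\Lambda_{\hat i_t j}(t)>\log a'$, so by (ii) $\tau^*=T_i+\frac{\log a'}{D_i^*\sum_u q_i^*(u)c(i,u)}(1+o(1))$ and the accrued cost is $\frac{\log a'}{D_i^*}(1+o(1))$; taking expectations --- by Wald's identity on the post-$T_i$ phase, using $\mathbb E_i[T_i]<\infty$ to absorb the pre-stabilization contribution, together with a uniform-integrability argument --- gives $\mathbb E_i[\sum_{t\le\tau^*}c(i,U_t)]\le\frac{\log a'}{D_i^*}(1+o(1))$. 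For the error, $\mathbb P_j(\delta^*=i)\le\mathbb P_j(\sup_t e^{\Lambda_{ij}(t)}>a')\le 1/a'$ by the maximal inequality for the mean-one $\mathbb P_j$-martingale $e^{\Lambda_{ij}(t)}$, whence $P_{\rm{max}}\le(H-1)/a'$ and $-\log P_{\rm{max}}=\log a'\,(1+o(1))$ as $a'\to\infty$; substituting yields $\mathbb E_i[\sum_{t\le\tau^*}c(i,U_t)]=\frac{-\log P_{\rm{max}}}{D_i^*}(1+o(1))$, which together with the converse proves both parts.

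The main obstacle is step~(i): controlling the \emph{moments} of the ML-estimate stabilization time $T_i$ when the exploitation observations are generated under an adaptively (and possibly wrongly) chosen control, hence are not i.i.d., and then propagating $\mathbb E_i[T_i]<\infty$ through a renewal/Wald argument so as to upgrade the almost-sure first-order behaviour of $\tau^*$ into the required expectation asymptotics. This is precisely the step that forces the sparse exploration schedule $\lfloor e^{bk}\rfloor$ with small $b$ and that leans on the type-counting bound (\ref{eqn-prelim-fact3}); the converse, by comparison, is a routine change-of-measure estimate once the LLR is written through the expected experiment counts $m_u$ and the extremal characterization of $D_i^*$ is invoked.
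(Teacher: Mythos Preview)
The paper does not prove Proposition~\ref{prop-1}: it is quoted from \citep{niti-veer-sqa-2014} as a preliminary result, with only the footnoted remark that the proof there (written for costs depending on $u$ alone) extends verbatim when $c$ also depends on the hypothesis. There is therefore no in-paper proof to compare your attempt against.

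That said, your sketch follows the standard Chernoff route and is structurally the same as what the paper does later, in its own proof of Theorem~\ref{thm-1}, for the specialised binary-outcome model. There the authors also organise the achievability around (a) a bound on the stabilization time of the ML estimate (their Lemma~\ref{lem-4}), (b) a growth estimate for the mismatched LLR after stabilization (their Lemma~\ref{lem-5}), and (c) a change-of-measure bound on $P_{\rm{max}}$ plus a tail-sum computation for the expected cost. The one methodological difference worth noting is in how the stabilization time is exploited: you aim for $\mathbb E_i[T_i]<\infty$ and then invoke almost-sure linear growth together with an unspecified uniform-integrability step to pass to expectations, whereas the paper (in Lemma~\ref{lem-4}) proves the stronger polynomial tail $\mathbb P_i[T>\epsilon t]=O(t^{-\gamma})$ for arbitrarily large $\gamma$, and then bounds $\mathbb E_i[\sum_{t\le\tau}c(i,U_t)]$ directly by integrating the tail of $\tau$ (their display (\ref{eqn-pf-Thm1-16})--(\ref{eqn-pf-Thm1-17})). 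Their approach sidesteps the UI issue entirely and is the cleaner way to close the gap you yourself flag as the main obstacle; if you want to turn your sketch into a full proof, replacing ``$\mathbb E_i[T_i]<\infty$ plus UI'' by a polynomial tail bound on $T_i$ of sufficiently high order is the natural fix.
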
 

\section{Model for Search and Stop}
\label{sec-model}

Consider searching for a single target located in one of the $M$ locations.  At each time $k \geq 1,$ if a location without the target is searched, then the observation $Y_k \in \mathcal{Y}$ is assumed to be conditionally independent of all past observations and past search locations, and to be conditionally distributed according to the \tddred{absence distribution $\pi.$}  The distribution $\pi$ represents pure noise, and we shall assume that this distribution is {\em known} to the searcher.  On the other hand, if the target location is searched, then the observation would be conditionally distributed according to the ``target'' distribution $\mu$ on $\mathcal{Y}$ (and would be conditionally independent of past observations and search locations).  We assume that both $\mu$ and $\pi$ have full support on $\mathcal{Y}$.

We also allow for the possibility of an absent target.  In this latter case, the observations at all locations are distributed according to $\pi.$  Denote the search location at time $k \geq 1$ by $U_k \in [M],$ which is allowed to be any function of all past observations $Y^{k-1} = \left( Y_1, \ldots, Y_{k-1} \right)$ and past search locations $U^{k-1} = \left( U_1, \ldots, U_{k-1} \right).$  

It is interesting to note that the most basic search problem with an overlook probability $\alpha > 0$ (see, e.g., Chapters 4, 5 of \citep{ston-book-2004} and Section 4.2 of \citep{benk-mont-weis-navres-1991}) that is uniform over all locations, corresponds to a special case of our general model wherein $\mathcal{Y} = \left \{0, 1 \right \},\ \mu(0) = \alpha,\ \pi(0) = 1$.  In contrast, our model allows for any general (finite) observation space $\mathcal{Y},$ but assumes that both $\mu$ and $\pi$ have full supports.  The degeneracy in the model for the classic search problem as mentioned affords the construction of a search plan that is more efficient than that for our model (with the assumption of full support).  The main concern for the classic search problem has been to come up with the search plan that is absolutely optimal (non-asymptotically), whereas our main concern is to construct a {\em universal} test that is asymptotically efficient in the regime of vanishing \tddred{error probability.}
%

We seek to design a universal sequential test to search the target (or to decide that it is missing).  Precisely speaking, a test consists of a sequential search policy, a stopping rule and a final decision rule.  The stopping rule defines a random stopping time, denoted by $N,$ which is the number of searches taken until the final decision is made.  At the stopping time, the final decision for the target location is made based on the decision rule $\delta: \mathcal{Y}^N \times [M]^N \rightarrow \left \{0, 1, \ldots, M \right \}$, where the $0$ output corresponds to the final decision for a missing target.  The overall goal is to achieve a certain level of accuracy for the final decision using the fewest number of \tddred{observations, universally} for all $\mu \neq \pi$.

\subsection{Fundamental Performance Limit}

When both $\mu$ and $\pi$ are known, the search and stop problem falls under the umbrella of sequential design of experiments with a uniform experimental cost \citep{cher-amstat-1959}.  In particular, there are $M+1$ hypotheses: $0, 1, \ldots, M,$ where the null (0-th) hypothesis corresponds to the possibility of a missing target.  Each $i$-th hypothesis, $i = 1, \ldots, M$, corresponds to a possible location of the present target.  The experiment set corresponds to $\mathcal{U} = [M];$ and the model $p^u_i (y), i = 0, \ldots, M,$ for sequential design of experiments can be identified as 
\begin{align}
p^u_i = \mu,\ u = i,\ \ \ p^u_i = \pi,&\ u \neq i,\ i = 1, \ldots, M,	\nonumber \\
p^u_0 = \pi,&\ u = 1, \ldots, M.
\label{eqn-search-model-as-exper-design}
\end{align}
Then in this idealistic situation when the probabilistic model (both $\mu$ and $\pi$) is known, by particularizing the characterization of the asymptotically optimal performance in 
\tddred{
Proposition \ref{prop-1} 
to our search and stop problem} using (\ref{eqn-search-model-as-exper-design}) and $c(i, u) = 1,\ \mbox{for~each~} i = 0, \ldots, M,\ \mbox{for~each~} u \in \mathcal{U},$ we get that as the error probability\\ $P_{\rm{max}} = \max\limits_{i = 0, \ldots, M}
\mathbb{P}_i \left [ \delta \left( Y^{N}, U^N \right) \neq i \right ]$ is driven to zero, the optimal \tddred{asymptotes of 
$\mathbb{E}_i [N],\ i = 0, \ldots, M,$} can be characterized as follows.

\begin{prop}  
\label{prop-2}
There exists a sequence of tests to search the target that satisfy $P_{\rm{max}} \rightarrow 0$ and \tddred{yield}
\begin{align}
\mathbb{E}_i[N]  &\ =\   
\left \{
\begin{array}{cc}
\frac{- \log{P_{\rm{max}}}}
{\frac{D\left( \pi \| \mu \right)}{M}} (1+o(1)),\ \ \ \ \ 
i = 0, \\
\\
\frac{-\log{P_{\rm{max}}}}
{D\left( \mu \| \pi \right)} 
(1+o(1)),\ \ \ \ \ 
i = 1, \ldots, M.  
\end{array}
\right.
\label{eqn-ideal-exp}
\end{align}
Furthermore, the asymptotic performance in (\ref{eqn-ideal-exp}) (each \tddred{term} in the denominators) is optimal for every $i = 0, \ldots, M$ simultaneously. 
\end{prop}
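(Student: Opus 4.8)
\medskip
\noindent\textbf{Proof strategy.} The plan is to read the idealized (model-known) search-and-stop problem as a special instance of the model-based sequential design of experiments of Subsection~\ref{subsec-SeqDesExpCost}, and then simply to evaluate the optimal-exponent formula of Proposition~\ref{prop-1}. First I would check that the identification (\ref{eqn-search-model-as-exper-design}) together with the uniform cost $c(i,u)\equiv 1$ satisfies the standing hypotheses of Proposition~\ref{prop-1}: since $\mu$ and $\pi$ have full support on $\mathcal{Y}$, every $p_i^u(y)>0$; the unit cost is strictly positive; and $\sum_{t=1}^{\tau}c(i,U_t)=N$. Hence both the achievability and the converse halves of Proposition~\ref{prop-1} apply verbatim, with $H=M+1$ and $\mathcal{U}=[M]$.

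The one substantive step is to evaluate, for each hypothesis $i\in\{0,1,\dots,M\}$, the quantity $R_i\triangleq\max_q\min_{j\neq i}\sum_u q(u)D\!\left(p_i^u\|p_j^u\right)$, which is the denominator appearing in Proposition~\ref{prop-1} once $c\equiv1$ is used. For a present target, $i\in\{1,\dots,M\}$, reading off (\ref{eqn-search-model-as-exper-design}) gives $\sum_u q(u)D(p_i^u\|p_0^u)=q(i)D(\mu\|\pi)$, while for $j\in\{1,\dots,M\}\setminus\{i\}$ one gets $\sum_u q(u)D(p_i^u\|p_j^u)=q(i)D(\mu\|\pi)+q(j)D(\pi\|\mu)\ge q(i)D(\mu\|\pi)$; therefore the inner minimum equals $q(i)D(\mu\|\pi)$, which is maximized by the point mass at $u=i$, so $R_i=D(\mu\|\pi)$ and $q_i^*$ is that point mass. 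For an absent target, $i=0$, one has $\sum_u q(u)D(p_0^u\|p_j^u)=q(j)D(\pi\|\mu)$ for every $j\in[M]$, so the inner minimum is $D(\pi\|\mu)\min_{j\in[M]}q(j)$; since $\min_j q(j)\le 1/M$ with equality precisely for the uniform $q$, we obtain $R_0=D(\pi\|\mu)/M$ with $q_0^*$ uniform.

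Substituting $R_i=D(\mu\|\pi)$ for $i\ge1$ and $R_0=D(\pi\|\mu)/M$ into the achievability part of Proposition~\ref{prop-1} produces exactly the asymptotics (\ref{eqn-ideal-exp}); substituting the same values into its converse part gives the matching lower bound $\mathbb{E}_i[N]\ge(-\log P_{\rm{max}})/R_i\,(1+o(1))$ for every $i=0,\dots,M$, which is the asserted simultaneous optimality. I expect the only place calling for a little care to be the evaluation of $R_i$: for a present target, observing that the binding alternative is any one that discards the nonnegative term $q(j)D(\pi\|\mu)$ (the absent hypothesis being the cleanest choice), so that only the weight $q(i)$ matters; and for an absent target, recognizing the resulting max--min over $q$ as the elementary fact that the uniform pmf on $[M]$ maximizes its smallest coordinate. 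Everything else is bookkeeping inherited from Proposition~\ref{prop-1}.
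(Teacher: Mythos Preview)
Your proposal is correct and follows exactly the route the paper takes: the paper does not give a separate proof of Proposition~\ref{prop-2} but simply states that it is obtained by particularizing Proposition~\ref{prop-1} to the model (\ref{eqn-search-model-as-exper-design}) with unit cost. Your explicit evaluation of the denominators $R_i$---identifying $j=0$ as the binding alternative when $i\in[M]$ and the uniform $q$ as the maximizer when $i=0$---is precisely the computation the paper leaves to the reader.
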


Of course, the asymptotic performance in Proposition \ref{prop-2} is idealistic, as it requires the knowledge of $\mu$ (with $\pi$ being already known).  When $\mu$ is not known, since $\mu$ can be arbitrarily close to $\pi,$ this asymptotic performance cannot be achieved universally.  Nevertheless, our main contribution (Theorem \ref{thm-1}) described below shows that one can design a universal test (without the knowledge of $\mu$) that drives the error probability to zero and achieves the optimal exponent of $D \left( \mu \| \pi \right)$ under all the {\em non-null} hypotheses universally for any $\mu \neq \pi$.    

\section{Proposed Universal Scheme for Search and Stop and Its Performance}
\label{sec-test-and-performance}

\subsection{Motivation}

Intuitively speaking, a desirable goal of the search at each location should be to \tddred{determine} if the target is there.  To this end, the universal sequential test for two hypotheses in Subsection \ref{subsec-univ-SebBinTest} can be used at each location to collect multiple subsequent observations that will eventually lead to a binary outcome (say 1 if it is guessed that the target is there, and 0 otherwise).  To improve reliability for this binary decision at a particular search location, one can increase the threshold $a$ in (\ref{eqn-time-upperthres-bin-test}), (\ref{eqn-stoptime-bin-test}), (\ref{eqn-decision-bin-test}) with the cost of taking more observations at that location.

If we \tddred{use} the mentioned sequential binary test at each location as the ``inner'' test, then it is convenient to select the current search location based on the past binary outcomes of the subsequent binary tests (instead of the past $\mathcal{Y}$-ary outcomes of every search, generally taken multiple times at each of the locations).  With this imposition, the search and stop problem can be reduced to a problem of constructing an ``outer'' test for \tddred{the} sequential design of such inner experiments, each of which has \tddred{a binary outcome.}

Mathematically speaking, we have reduced the original problem of sequential design of $\mathcal{Y}$-ary-output experiments specified by the (conditional) distributions as in (\ref{eqn-search-model-as-exper-design}) to one of sequential design of binary-output experiments specified as
\begin{align}
\mu_b(0) ~=~ 1- \mu_b(1) ~=~ \beta_a,\ 
\pi_b(1) ~=~ 1- \pi_b(0) ~=~ \alpha_a,
\label{eqn-search-model-as-bin-exper-design-0}
\end{align}
and
\begin{align}
p^u_i = \mu_b,\ u = i,&\ \ \ 
p^u_i = \pi_b,\ u \neq i,\ i = 1, \ldots, M, \nonumber \\
&\ \ \ p^u_0 = \pi_b,\ u = 1, \ldots, M
\label{eqn-search-model-as-bin-exper-design}
\end{align}
where $\alpha_{a}, \beta_{a}$ are as defined in  (\ref{eqn-falsealarm-USPRT}), (\ref{eqn-cost-USPRT-2}).  On the other hand, each binary-output experiment will not have the same cost as for the original $\mathcal{Y}$-ary-output experiment.  In particular, the cost of each binary-output experiment can be specified as
\begin{align}
c(i, u) = c_{a},\  u =i,
\ \ &\ c(i, u) = {\kappa}_{a},\ u \neq i,\ i = 1, \ldots, M,
\nonumber \\
c(0, u) &= {\kappa}_{a},\ u = 1, \ldots, M, 
\label{eqn-search-model-cost-bin-exper}
\end{align}
where $c_{a}, {\kappa}_{a}$ are as defined \tddred{in (\ref{eqn-cost-USPRT}) and (\ref{eqn-cost-USPRTb}), respectively}.

There is still a large gap in turning the motivation described above into a ``working'' test for search and stop.  To this end, there are two major challenges.  First, the optimal test for sequential design of experiments in (\ref{eqn-ctrl-exploit-DesExp}), (\ref{eqn-ctrl-explore-DesExp}), (\ref{eqn-stop-DesExp}), achieving the performance stated Proposition \ref{prop-2}, requires precise knowledge of the model.  In contrast, the induced model for sequential design of binary-output experiments in 
(\ref{eqn-search-model-as-bin-exper-design-0}), (\ref{eqn-search-model-as-bin-exper-design}) is a complicated function of the inner threshold $a$ for the sequential binary test \tddred{in (\ref{eqn-time-upperthres-bin-test}), (\ref{eqn-stoptime-bin-test}), (\ref{eqn-decision-bin-test}).  Only} an estimate of this ``true'' induced model is available through the bounds for $\alpha_{a}, \beta_{a}$ stated in (\ref{eqn-falsealarm-USPRT}), (\ref{eqn-misdetect-USPRT}) of Lemma \ref{lem-1}.  Second, as the threshold $a'$ for the optimal test in (\ref{eqn-ctrl-exploit-DesExp}), (\ref{eqn-ctrl-explore-DesExp}), (\ref{eqn-stop-DesExp}) increases, the model for sequential design of experiments in Proposition \ref{prop-2} remains fixed.  \tddred{In contrast,} in \tddred{our proposed test,} the ``outer'' threshold for the test for sequential design of binary-output experiments increases together with the inner threshold $a,$ the latter of which determines the induced model in (\ref{eqn-search-model-as-bin-exper-design-0}), (\ref{eqn-search-model-as-bin-exper-design}).  Consequently, the analysis leading to Proposition \ref{prop-2} does not apply to our proposed test.  Our main technical contribution are precisely, first, to overcome these challenges through the proposed test described below in Subsection \ref{subsec-proposedtest}, employing an outer threshold, which is an appropriate function of the inter threshold, and, second, to provide the analysis for its performance, stated in Theorem \ref{thm-1} below.

\subsection{Proposed Universal Test}
\label{subsec-proposedtest}

As mentioned in the previous subsection, the ``true'' induced model for sequential design of 
\tddred{the} binary-output experiments in (\ref{eqn-search-model-as-bin-exper-design-0}), (\ref{eqn-search-model-as-bin-exper-design}) is a complicated function of the inner threshold $a$ and is not available to us.  Nevertheless, Lemma \ref{lem-1} yields that for $a$ sufficiently large (as a function of $\nu,$ in (\ref{eqn-cost-USPRT}) and $\mu, \pi$) 
\begin{align}
\alpha_{a}, \beta_{a} \leq \frac{1}{a}.  
\label{eqn-more-noisy-cond}
\end{align}
Our idea would be to use a mismatched model defined in terms of $\overline{\mu}_b, \overline{\pi}_b,$ where 
\begin{align}
\overline{\mu}_b(0) ~=~ 1- \overline{\mu}_b(1) ~=~
\overline{\pi}_b(1) ~=~ 1-\overline{\pi}_b(0) ~=~ \frac{1}{a}
\label{eqn-reciprocity-mubar-pibar}
\end{align}
to perform the sequential design of \tddred{the} binary-output experiments.  Specifically, instead of (\ref{eqn-search-model-as-bin-exper-design-0}), (\ref{eqn-search-model-as-bin-exper-design}), consider the following mismatched model for sequential design of binary-output experiments
\begin{align}
\overline{p}^u_i = \overline{\mu}_b,\ u = i,&\ \ \  
\overline{p}^u_i = \overline{\pi}_b,\ u \neq i,
\ i = 1, \ldots, M, \nonumber \\
&\ \ \ \overline{p}^u_0 = \overline{\pi}_b,\ u = 1, \ldots, M.
\label{eqn-search-mismatched-model-as-bin-exper-design}
\end{align}
Heuristically speaking, by (\ref{eqn-more-noisy-cond}), this mismatched model is ``more noisy'' than the true model (for large $a$); hence, the test designed based on this mismatched model should be conservative enough to work well for the true model as well.  This intuition will be proven to be correct.

With the mismatched model specified in (\ref{eqn-search-mismatched-model-as-bin-exper-design}), we can now describe our universal test as follows.  At each time $t \geq 1,$ we compute the \tmred{estimate} of the true hypothesis $\hat{i}$ based on past searched locations and their binary outcomes $u^{t-1}, z^{t-1}$ using the (mismatched) model $~\overline{p}_i^u,  i = 0, \ldots, M, u \in [M]$ in (\ref{eqn-search-mismatched-model-as-bin-exper-design}).  Denote $N(i, 1), N(i, 0),\ i \in [M],$ as the number of times the $i$-th location were searched and the sequential binary test in (\ref{eqn-time-upperthres-bin-test}), (\ref{eqn-stoptime-bin-test}),  (\ref{eqn-decision-bin-test}) decides that the target is there, and that the target is not there, respectively.  By the reciprocity of $\overline{\mu}_b$ and $\overline{\pi}_b,$ in (\ref{eqn-reciprocity-mubar-pibar}), the computation of \tmred{this estimate} can be simplified as
\begin{align}
\hat{i} &\ =\
\left \{
\begin{array}{cc}
\mathop{\rm{argmax}}\limits_{i \in [M]}~N(i, 1)-N(i,0)
&\mbox{if~} \max\limits_{i \in [M]} N(i, 1)-N(i, 0) > 0\\
0	&\mbox{if~} \max\limits_{i \in [M]} N(i, 1)-N(i, 0) \leq 0.
\end{array}
\right.
\label{eqn-mainalg-ML-estimation}
\end{align}    
The \tmred{estimation} in (\ref{eqn-mainalg-ML-estimation}) is quite intuitive, as the difference between the numbers of ``searched-and-found'' and ``searched-and-not-found'' at the $i$-th location: $N(i, 1) - N(i, 0),$
\\$i \in [M],$ should \tddred{approximate the} likelihood that the target is there.  When all these numbers are negative, it is most likely that the target is missing.

For $b > 0,$ during the sparse occasions $t = \lfloor e^{b k} \rfloor,\ k = 0, 1, \ldots$, the experiment is selected to explore all locations in a round-robin manner as
\begin{align}
u_t ~=~ \left(  k ~\mbox{mod}~ M \right) + 1
\label{eqn-ctrl-explore-DesBinExp}
\end{align}
independently of $\hat{i}.$  At all the other times, if $\hat{i} \neq 0,$ we shall search at the $i$-th location, i.e.,
\begin{align}
u_t = \hat{i},\  \mbox{if}~~\hat{i} \neq 0.
\label{eqn-ctrl-exploit-DesBinExp-nonnull}
\end{align}
If $\hat{i} = 0,$ we search among all locations with equal frequency, namely, 
\begin{align}
u_t = \left( i_{t'}  ~\mbox{mod}~ M \right) + 1, 
\label{eqn-ctrl-exploit-DesBinExp-0}
\end{align}
where $i_{t'}$ was the search location at the last time $t' < t$ such that $\hat{i} = 0.$
Denote the joint (mismatched) distribution under the $i$-th hypothesis of all binary searched outcomes up to time $t$ (induced by the above control policy) by $\overline{p}_i \left( z^t \right).$  
The test stops at time $\tau $ and decides \tddred{in favor of} the \tmred{current estimate of the hypothesis} as:
\begin{align}
\tau  \ \triangleq\ \mathop{\rm{argmin}}_{t} 
\left [
\left(
\ \mathop{\min\limits_{j = 0, \ldots, M,}}_{j \neq \hat{i}} 
\ 
\frac{\overline{p}_{\hat{i}} \left( z^t \right)}
{\overline{p}_j \left( z^t \right)} 
\right)
\ >\ e^{a^{\rho_2} \left( \log{a} \right)^{\rho_1}} \right ],\  \
\delta \left( z^{\tau} \right) = \hat{i},
\label{eqn-stop-DesBinExp-original}
\end{align}
where $a$ is the inner threshold for the binary test in (\ref{eqn-time-upperthres-bin-test}), (\ref{eqn-stoptime-bin-test}), (\ref{eqn-decision-bin-test}) and some $\rho_2 > 1$.
\tmred{As clarified at the end of the paragraph preceding Proposition \ref{prop-1}, since the search policy in 
(\ref{eqn-ctrl-explore-DesBinExp}), (\ref{eqn-ctrl-exploit-DesBinExp-nonnull}), (\ref{eqn-ctrl-exploit-DesBinExp-0})
specifies the search location as a deterministic function of the current estimate of the hypothesis at all times, it suffices to work with the joint distribution 
$\overline{p}_i \left( z^t \right)$ instead of $\overline{p}_i \left( z^t , u^t \right),$ i.e., $u^t$ can be written as a deterministic function of $z^t$.}

Using (\ref{eqn-reciprocity-mubar-pibar}), (\ref{eqn-search-mismatched-model-as-bin-exper-design}), we can simplify (\ref{eqn-stop-DesBinExp-original}) as
\begin{align}
\tau ~=~\min \left( \tau', \tau_0 \right),
\label{eqn-stop-DesBinExp}
\end{align}
where
\begin{align}
\tau' &~\triangleq~
\mathop{\rm{argmin}}_{t:\  \hat{i} \neq 0}~
\Bigg [ 
\min \left(
\begin{array}{cc}
\left( N(\hat{i}, 1)-N(\hat{i}, 0) \right), \\
\min\limits_{j \neq \hat{i}} 
\left( \left( N(\hat{i}, 1)-N(\hat{i}, 0) \right) 
- \left( N(j, 1)-N(j, 0) \right) \right)
\end{array}
\right)		\label{eqn-stop-DesBinExp-a} \\
&\hspace{1.2in} >~ 
\frac{a^{\rho_2} 
\left( \log{a} \right)^{\rho_1}}{\log{\left( a -1 \right)}} \Bigg  ]; 
\nonumber \\
\tau_0 &~\triangleq~
\mathop{\rm{argmin}}_{t:\  \hat{i} = 0}~
\left [ \min\limits_{i \in [M]} \left( N(i, 0)-N(i, 1) \right)  ~>~ 
\frac{a^{\rho_2} \left( \log{a} \right)^{\rho_1}}{\log{\left( a -1 \right)}} \right ].
\label{eqn-stop-DesBinExp-b}
\end{align}
Of course, the Markov time in (\ref{eqn-stop-DesBinExp-a}) is reached first when 
$\delta \left( z^{\tau} \right) = \hat{i} \neq 0,$ whereas the other Markov time in (\ref{eqn-stop-DesBinExp-b}) is reached first when $\delta \left( z^{\tau} \right) = \hat{i} = 0$.

Note that the total number of $\mathcal{Y}$-ary-output observations $N$ used to produce the search result is related to the stopping time $\tau$ above as
\begin{align}
N \ =\ \sum\limits_{t=1}^{\tau} N^b_{t},
\nonumber
\end{align}
where each $N^b_t,\ t= 1, \ldots, \tau,$ is the number of observations taken at each location until the sequential test in (\ref{eqn-time-upperthres-bin-test}), (\ref{eqn-stoptime-bin-test}),  (\ref{eqn-decision-bin-test}) produces a binary result $Z_t.$  Consequently, we get from successive uses of the property of conditional expectation and (\ref{eqn-search-model-cost-bin-exper}) that under the true hypothesis $i = 0, \ldots, M,$ it holds that
\begin{align}
\mathbb{E}_i \left [ N \right ] 
&\ =\ \mathbb{E}_i \left [ 
\sum\limits_{t=1}^{\tau-1}
N^b_{t}
~+~
\mathbb{E}_i \left [ 
N^b_{\tau}
~\bigg \vert~ Y^{\left( \sum\limits_{t=1}^{\tau-1} N^b_{t} \right)}
\right ]
\right ]
\nonumber \\
&\ = \
\mathbb{E}_i \left [ 
\sum\limits_{t=1}^{\tau-1}
N^b_{t}
~+~
\mathbb{E}_i \left [ 
N^b_{\tau}
~\bigg \vert~ U_{\tau}
\right ]
\right ]
\nonumber \\
&\ = \
\mathbb{E}_i \left [ 
\sum\limits_{t=1}^{\tau-1}
N^b_{t}
~+~
c \left( i, U_{\tau} \right)
\right ]
\nonumber  \\
&\ = \
\mathbb{E}_i \left [ 
\sum\limits_{t=1}^{\tau}
c \left( i, U_{t} \right)
\right ].
\nonumber 
\end{align}

\subsection{Performance of Proposed Test}
\label{subsec-perfOfTest}

\begin{thm}  
\label{thm-1}  For any $\nu < 1$ in (\ref{eqn-cost-USPRT}) and for $b >0$ used in (\ref{eqn-ctrl-explore-DesBinExp}) chosen to be sufficiently small, as $a \rightarrow \infty$, the test in (\ref{eqn-mainalg-ML-estimation}), (\ref{eqn-ctrl-explore-DesBinExp}), (\ref{eqn-ctrl-exploit-DesBinExp-nonnull}), (\ref{eqn-ctrl-exploit-DesBinExp-0}), (\ref{eqn-stop-DesBinExp-original}) yields a vanishing error probability $P_{\rm{max}} \rightarrow 0$ and also satisfies
\begin{align}
\mathbb{E}_i[N]  \ =\ 
\mathbb{E}_i \left [ \sum\limits_{t=1}^{\tau} c\left( i, U_t \right) \right ]
\ \leq\   
\frac{-\log{P_{\rm{max}}}}
{\nu D\left( \mu \| \pi \right)} (1+o(1)),\ \ \ \ \ 
i = 1, \ldots, M,
\label{eqn-thm-1-assertion}
\end{align}
universally for every $\mu \neq \pi.$ 
\end{thm}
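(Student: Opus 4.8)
The plan is to read the proposed scheme as an instance of the sequential design of experiments of Proposition~\ref{prop-1} applied to the binary-output reduction of Section~\ref{sec-test-and-performance}, and then to absorb the two features that place it outside the hypotheses of Proposition~\ref{prop-1}: the induced binary model $(\mu_b,\pi_b)$ is not known exactly (only the bounds $\alpha_a,\beta_a\le 1/a$ of Lemma~\ref{lem-1} are available) and the outer threshold $e^{a^{\rho_2}(\log a)^{\rho_1}}$ grows together with the inner threshold $a$. The enabling observation is that the outer search rule is calibrated to the \emph{mismatched} model $(\overline{\mu}_b,\overline{\pi}_b)$ of (\ref{eqn-reciprocity-mubar-pibar}), whose crossover probability $1/a$ dominates both $\alpha_a$ and $\beta_a$; this ``extra noisiness'' makes a test tuned to $(\overline{\mu}_b,\overline{\pi}_b)$ conservative for the true model. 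I would establish two bounds, a lower bound on $-\log P_{\rm max}$ and an upper bound on $\mathbb{E}_i[N]$ for $i\in[M]$, and combine them: both quantities turn out to equal $a^{\rho_2}(\log a)^{\rho_1}(1+o(1))$ up to the factor $\nu D(\mu\|\pi)$, which is exactly (\ref{eqn-thm-1-assertion}).

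For the error bound, fix a true hypothesis $i\in\{0,\dots,M\}$ and a wrong guess $j\ne i$. On $\{\delta(Z^\tau)=j\}$ the stopping rule (\ref{eqn-stop-DesBinExp-original}) forces $\overline{p}_j(Z^\tau)/\overline{p}_i(Z^\tau)>e^{a^{\rho_2}(\log a)^{\rho_1}}$. Since each search location $U_t$ is a deterministic function of the past binary outcomes, the process $\overline{p}_j(Z^t)/p_i(Z^t)$, with $p_i$ the \emph{true} joint law of the binary outcomes under hypothesis $i$ (crossover probabilities $\alpha_a,\beta_a$), is an exact $\mathbb{P}_i$-martingale of mean one, to which I would apply Doob's maximal inequality. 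The bridge from this martingale to the mismatched stopping statistic is the factor $\overline{p}_i(Z^\tau)/p_i(Z^\tau)$, which, using only $\alpha_a,\beta_a\le 1/a$ so that each of its per-search factors is at least $1-2/a$, is at least $\exp\{-O(\tau/a)\}$ on every sample path given the value of $\tau$. On the event $\{\tau\le\theta_a\}$, with $\theta_a$ chosen so that $T_a=o(\theta_a)$ while $\theta_a=o(a^{\rho_2+1}(\log a)^{\rho_1})$, this factor is $\exp\{o(a^{\rho_2}(\log a)^{\rho_1})\}$; and the complement has probability at most $e^{-c\theta_a}$ for a constant $c>0$ uniform in $a$ (because once the estimate locks onto the true hypothesis, the relevant count has positive drift bounded away from $0$ uniformly in $a$), which is negligible next to $e^{-a^{\rho_2}(\log a)^{\rho_1}}$. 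A union bound over $i$ and $j$ then yields $P_{\rm max}\to 0$ and $-\log P_{\rm max}\ge a^{\rho_2}(\log a)^{\rho_1}(1-o(1))$.

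For the cost bound, fix $i\in[M]$ and use $\mathbb{E}_i[N]=\mathbb{E}_i\!\left[\sum_{t=1}^\tau c(i,U_t)\right]$ (established before the theorem), with $c(i,i)=c_a$ and $c(i,u)=\kappa_a$ for $u\ne i$. Under hypothesis $i$, the round-robin exploration (\ref{eqn-ctrl-explore-DesBinExp}) samples location $i$ repeatedly, $N(i,1)-N(i,0)$ has positive drift $1-2\beta_a$ at location $i$ while each competitor $N(j,1)-N(j,0)$, $j\ne i$, has negative drift and is visited only $O(\log t)$ times; hence $\hat i$ locks onto $i$ after a phase of expected length $o(T_a)$ with a geometric-type tail, after which exploitation (\ref{eqn-ctrl-exploit-DesBinExp-nonnull}) searches only location $i$ and drives $N(i,1)-N(i,0)$, and a fortiori the gap conditions since the competitors remain $o(T_a)$ on the relevant events, past the threshold $T_a=a^{\rho_2}(\log a)^{\rho_1}/\log(a-1)$ in $T_a(1+o(1))$ further steps. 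Bounding the contribution of the rare ``slow lock-on'' event by Cauchy--Schwarz, using $N\le\tau\lfloor a(\log a)^{\rho_1}\rfloor$ together with the exponential tail of $\tau$ to get polynomially bounded moments, one obtains $\mathbb{E}_i[\tau]\le T_a(1+o(1))$ and hence $\mathbb{E}_i[N]\le T_a\,c_a(1+o(1))+O(\log a)\,\kappa_a$. By Lemma~\ref{lem-1} the first term is at most $\frac{a^{\rho_2}(\log a)^{\rho_1}}{\nu D(\mu\|\pi)}(1+o(1))$ (using $\log a/\log(a-1)\to1$), while the overhead $O(\log a)\,\kappa_a=O\!\left(a(\log a)^{\rho_1+1}\right)$ is $o\!\left(a^{\rho_2}(\log a)^{\rho_1}\right)$ since $\rho_2>1$; combining with the error bound gives (\ref{eqn-thm-1-assertion}).

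The step I expect to be the main obstacle is making the error analysis rigorous while the model and the threshold both vary with $a$: one must show quantitatively that trading the true binary model for the $1/a$-crossover model costs only a sub-leading factor in the error exponent, which hinges on combining the relative-entropy estimates $D(\mathrm{Bern}(\alpha_a)\|\mathrm{Bern}(1/a)),\,D(\mathrm{Bern}(\beta_a)\|\mathrm{Bern}(1/a))=O(1/a)$ with the bound $\tau=O(a^{\rho_2}(\log a)^{\rho_1-1})$ on the number of outer steps on the dominant event, and the latter is itself what the lock-on analysis must deliver with a tail strong enough to survive against $e^{-a^{\rho_2}(\log a)^{\rho_1}}$. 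A secondary difficulty is the uniform-in-$a$ moment control of $\tau$ needed to make the $(1+o(1))$ assertions unconditional rather than conditional on a high-probability event.
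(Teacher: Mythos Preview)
Your overall two-part decomposition (bounding $-\log P_{\rm max}$ from below and $\mathbb{E}_i[N]$ from above, both by $a^{\rho_2}(\log a)^{\rho_1}$ up to the factor $\nu D(\mu\|\pi)$) is exactly the paper's, and your cost analysis is essentially the paper's Lemmas~\ref{lem-4}--\ref{lem-5} in narrative form: lock-on in $o(T_a)$ outer steps with polynomial tail, then a nearly deterministic hitting time $T_a(1+o(1))$ for the biased walk $N(i,1)-N(i,0)$, with the exploration overhead $O(\log a)\,\kappa_a$ absorbed because $\rho_2>1$.

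The error analysis, however, diverges from the paper's and carries a real gap. Your route writes
\[
\frac{\overline{p}_j(Z^\tau)}{\overline{p}_i(Z^\tau)}
\;=\;
\frac{\overline{p}_j(Z^\tau)}{p_i(Z^\tau)}\cdot\frac{p_i(Z^\tau)}{\overline{p}_i(Z^\tau)},
\]
applies Doob's inequality to the first factor, and absorbs the second as $e^{O(\tau/a)}$ by restricting to $\{\tau\le\theta_a\}$. This forces you to show $\mathbb{P}_i[\tau>\theta_a]=o\!\left(e^{-a^{\rho_2}(\log a)^{\rho_1}}\right)$, i.e., an exponential tail for $\tau$ with rate uniform in $a$. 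But the lock-on argument you invoke---the same one that drives the cost bound and that the paper formalizes in Lemma~\ref{lem-4}---only delivers a \emph{polynomial} tail $\mathbb{P}_i[T>\epsilon t]=O(t^{-\gamma})$, because the guaranteed number of visits to location $i$ up to time $\ell$ coming from the exploration clock (\ref{eqn-ctrl-explore-DesBinExp}) is merely $\Theta(\log\ell)$. A polynomial tail on the lock-on time is hopeless against the target $e^{-a^{\rho_2}(\log a)^{\rho_1}}$, so the split $\{\tau\le\theta_a\}\cup\{\tau>\theta_a\}$ does not close as stated. You flag this as the ``main obstacle,'' and it is: making your route rigorous would require a substantially sharper tail for $\tau$ than anything the cost analysis provides.

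The paper sidesteps this entirely. Instead of comparing $\overline{p}_i$ to $p_i$, it builds, for each wrong $j$, an auxiliary \emph{bona fide} probability law $\tilde p_j$ (with crossover parameters $\tilde\mu_b(0)=(1-\alpha_a)\tfrac{1/a}{1-1/a}$ and $\tilde\pi_b(1)=(1-\beta_a)\tfrac{1/a}{1-1/a}$, placed only at the coordinates $u\in\{i,j\}$ where $\overline p_i$ and $\overline p_j$ differ) satisfying the \emph{pointwise} domination
\[
\frac{\tilde p_j(z^t)}{p_i(z^t)}\ \ge\ \frac{\overline p_j(z^t)}{\overline p_i(z^t)}\qquad\text{for every }z^t,
\]
which follows from $\alpha_a,\beta_a\le 1/a$ by a four-line computation. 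With this in hand, the stopping rule (\ref{eqn-stop-DesBinExp-original}) and a straight change of measure give
\[
\mathbb{P}_i[\delta=j]\ \le\ \sum_{t\ge1}\mathbb{P}_i\!\left[\tfrac{\tilde p_j(Z^t)}{p_i(Z^t)}>e^{a^{\rho_2}(\log a)^{\rho_1}},\,\tau=t\right]
\ \le\ e^{-a^{\rho_2}(\log a)^{\rho_1}}\sum_{t\ge1}\tilde{\mathbb{P}}_j[\tau=t]\ \le\ e^{-a^{\rho_2}(\log a)^{\rho_1}},
\]
hence $P_{\rm max}\le M\,e^{-a^{\rho_2}(\log a)^{\rho_1}}$ with no appeal whatsoever to the size of $\tau$. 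This is the missing idea in your error bound: rather than paying a random correction $e^{O(\tau/a)}$ and then fighting to control $\tau$, tilt the \emph{alternative} so that the mismatched likelihood ratio is dominated pathwise by a genuine likelihood ratio against $p_i$.
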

\begin{rem}
Compared to the idealistically optimal performance (when $\mu$ is known) in Proposition \ref{prop-2}, it is interesting to note that our universal test is \tddred{universally asymptotically} optimal, except only when the target is missing.  In other words, the knowledge of the target distribution is only useful in improving reliability for detecting the missing target.  
This consequence of our result is \tddred{directly relevant} in practical settings, wherein the knowledge of the target distribution $\mu$ \tddred{would} be lacking before the target is found.
\end{rem}

\subsection{Comparison with Universal Non-Adaptive Scheme for Search and Stop}

Our main result in Theorem \ref{thm-1} \tddred{illustrates} that one can construct a test with adaptive search policy, using only the knowledge of $\pi,$ that yields a vanishing error probability and achieves the exponent of $D \left( \mu \| \pi \right)$ universally for every $\mu \neq \pi$ when the target is present. A natural question that arises is how much can be gained by employing such an adaptive search policy beyond a non-adaptive one.  A non-adaptive search policy $\overline{\phi}$ has to specify the sequence of search locations at the outset and cannot adapt to the outcomes of the instantaneous searches.  By the symmetry of the problem, there is no reason for a non-adaptive search policy to favor any location.  Consequently, the only non-adaptive search policy that should be considered in the universal setting is the one that search all locations with equal frequency:
\begin{align}
u_k ~=~ \left( k~\mbox{mod}~M \right) + 1,\ k \geq 0.
\label{eqn-nonadaptve-search}
\end{align}
We denote this non-adaptive search policy by $\overline{\phi}^*$.  With this search policy, an efficient universal test has been constructed in \citep{li-niti-veer-asilomar-2014}, which we now describe.  

For each time $k = \ell M,\ \ell = 1, 2, \ldots,$ let $\gamma_i,\ i = 1, \ldots, M$ denote the empirical distribution of the observations when the $i$-th location is searched, namely,\\ $\gamma_i = 
\left( y_{i}, y_{M+i}, \ldots, y_{\left(\ell -1\right)M + i} \right),\ i = 1, \ldots, M$.  Next, \tddred{denote} the estimate of the target location $\hat{i}$ as
\begin{align}
\hat{i} ~=~ \mathop{\rm{argmax}}_{i \in [M]} D \left( \gamma_i \| \pi \right).
\label{eqn-MLest-nonadaptive}
\end{align}
With the non-adaptive search policy $\overline{\phi}^*,$ consider the stopping rule defined in terms of the following Markov time:
\begin{align}
\overline{N}' \triangleq
M \times \mathop{\rm{argmin}}_{\ell \geq 1}  \left [ 
\left( D \left( \gamma_{\hat{i}} \| \pi \right) 
-
\max\limits_{j \neq \hat{i}} D \left( \gamma_{j} \| \pi \right) 
\right)
> \log{\overline{a}} 
+ M \vert \mathcal{Y} \vert \log{\left( \ell+1 \right)} \right ].
\label{eqn-stoptime-nonadaptive}
\end{align}
The test stops at time $\overline{N},$ where
\begin{align}
\overline{N} ~\triangleq~ 
\min \left( 
\overline{N}',
\lfloor \overline{a} \log{\overline{a}} \rfloor 
\right).
\label{eqn-stoptime-nonadaptive-2}
\end{align}
Correspondingly, the final decision is made according to 
\begin{align}
\overline{\delta} \left( Y^{\overline{N}} \right) ~=~ 
\left \{
\begin{array}{cc}
\hat{i} &\mbox{if}\ \overline{N}' \leq \overline{a} \log{\overline{a}}\\
0 &\mbox{if}\ \overline{N}' > \overline{a} \log{\overline{a}}.
\end{array}
\right.
\label{eqn-decision-nonadaptive}
\end{align}

The performance of this test with the non-adaptive search scheme follows from the result in 
\citep{li-niti-veer-asilomar-2014}.
\begin{prop}[\citep{li-niti-veer-asilomar-2014}]  
\label{prop-3}  With the non-adaptive search policy $\overline{\phi}^*$ in (\ref{eqn-nonadaptve-search}), the test in (\ref{eqn-MLest-nonadaptive}), (\ref{eqn-stoptime-nonadaptive}), (\ref{eqn-stoptime-nonadaptive-2}), (\ref{eqn-decision-nonadaptive}) yields a vanishing error probability $P_{\rm{max}} \rightarrow 0$ and also satisfies
\begin{align}
\mathbb{E}_i[\overline{N}]  
\ \leq\   
\frac{-\log{P_{\rm{max}}}}
{\frac{D\left( \mu \| \pi \right)}{M}} (1+o(1)),\ \ \ \ \ 
i = 1, \ldots, M,
\nonumber
\end{align}
universally for every $\mu \neq \pi.$ 
\end{prop}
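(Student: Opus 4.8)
The plan is to specialize the sequential–analysis argument behind Lemma~\ref{lem-1} to the round–robin search policy $\overline{\phi}^*$ of (\ref{eqn-nonadaptve-search}), as in \citep{li-niti-veer-asilomar-2014}. After $\ell$ complete rounds — i.e., $\ell M$ observations — let $\gamma_i^{(\ell)}$ be the empirical distribution of the $\ell$ observations collected at location $i$; under $H_i$ ($i\ge1$) these are i.i.d.\ $\mu$ at location $i$ and i.i.d.\ $\pi$ at every other location, while under $H_0$ all of them are i.i.d.\ $\pi$. By the type identities (\ref{eqn-prelim-fact1}), (\ref{eqn-prelim-fact2}), $D(\gamma_{\hat{i}}^{(\ell)}\|\pi)-\max_{j\neq\hat{i}}D(\gamma_j^{(\ell)}\|\pi)$ is the per–sample generalized log–likelihood ratio of ``target at $\hat{i}$'' against the closest competing location; hence $\overline{N}'/M$ in (\ref{eqn-stoptime-nonadaptive}) is the first round $\ell$ at which the log–GLR $\ell\bigl(D(\gamma_{\hat{i}}^{(\ell)}\|\pi)-\max_{j\neq\hat{i}}D(\gamma_j^{(\ell)}\|\pi)\bigr)$ crosses the time–varying threshold $\log\overline{a}+M|\mathcal{Y}|\log(\ell+1)$, and $\hat{i}$ of (\ref{eqn-MLest-nonadaptive}) is the ML location estimate. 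First I would bound the error probability, then the expected sample size.

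\textbf{Error probability.} Under $H_0$ the error event $\{\overline{\delta}\neq0\}$ is exactly $\{\overline{N}'\le\overline{a}\log\overline{a}\}$, and since $D(\gamma_{\hat{i}}^{(\ell)}\|\pi)-\max_{j\neq\hat{i}}D(\gamma_j^{(\ell)}\|\pi)\le\max_{i\in[M]}D(\gamma_i^{(\ell)}\|\pi)$, this forces some location $i$ and some round $\ell\le\lfloor\overline{a}\log\overline{a}\rfloor/M$ with $\ell D(\gamma_i^{(\ell)}\|\pi)>\log\overline{a}+M|\mathcal{Y}|\log(\ell+1)$; applying (\ref{eqn-prelim-fact3}) with $\epsilon=[\log\overline{a}+M|\mathcal{Y}|\log(\ell+1)]/\ell$ bounds each such event by $(\ell+1)^{-(M-1)|\mathcal{Y}|}/\overline{a}$, and a union bound over $i\in[M]$ and $\ell\ge1$ (the series $\sum_{\ell\ge1}(\ell+1)^{-(M-1)|\mathcal{Y}|}$ converges for $M\ge2$) gives $\mathbb{P}_0[\overline{\delta}\neq0]=O(1/\overline{a})$. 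Under $H_i$ ($i\ge1$): if $\overline{\delta}=j$ with $1\le j\neq i$, then at the stopping round the statistic in (\ref{eqn-stoptime-nonadaptive}) is positive, so in particular $\ell D(\gamma_j^{(\ell)}\|\pi)>\log\overline{a}+M|\mathcal{Y}|\log(\ell+1)$ for that $\ell$; location $j\neq i$ again emits i.i.d.\ $\pi$ observations, so the same use of (\ref{eqn-prelim-fact3}) and a union bound over $j$ give $\mathbb{P}_i[\overline{\delta}\notin\{0,i\}]=O(1/\overline{a})$. The only remaining error is $\{\overline{\delta}=0\}=\{\overline{N}'>\overline{a}\log\overline{a}\}$, which by Markov's inequality and the sample–size bound below has probability $O(1/\overline{a})$. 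Hence $P_{\rm{max}}=O(1/\overline{a})\to0$, and in particular $-\log P_{\rm{max}}\ge\log\overline{a}-O(1)$.

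\textbf{Expected sample size under $H_i$, $i\ge1$.} Since $\overline{N}\le\overline{N}'$ it suffices to bound $\mathbb{E}_i[\overline{N}']$. Fix $\nu'<1$ and set $\ell_0=\lceil\log\overline{a}/(\nu'D(\mu\|\pi))\rceil$; for $\overline{a}$ large one has $[\log\overline{a}+M|\mathcal{Y}|\log(\ell+1)]/\ell<D(\mu\|\pi)$ for all $\ell\ge\ell_0$, because $\log\ell_0=O(\log\log\overline{a})=o(\ell_0)$. On $\{\overline{N}'/M>\ell\}$ with $\ell\ge\ell_0$ the statistic stayed below threshold at round $\ell$, which forces one of: the ML estimate is wrong ($\hat{i}^{(\ell)}\neq i$), or $D(\gamma_i^{(\ell)}\|\pi)$ is bounded away below $D(\mu\|\pi)$, or $\max_{j\neq i}D(\gamma_j^{(\ell)}\|\pi)$ is bounded away above $0$; each is a large–deviations event whose probability decays geometrically in $\ell$ by (\ref{eqn-prelim-fact3}) and a Sanov–type bound (for the first two, using that any type at $\pi$–divergence bounded below $D(\mu\|\pi)$ has $\mu$–divergence bounded away from $0$, by compactness). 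Hence $\mathbb{E}_i[\overline{N}']=M\sum_{\ell\ge0}\mathbb{P}_i[\overline{N}'/M>\ell]\le M\ell_0+M\sum_{\ell>\ell_0}\mathbb{P}_i[\overline{N}'/M>\ell]\le M\ell_0+O(1)$, and letting $\nu'\uparrow1$ gives $\mathbb{E}_i[\overline{N}]\le\mathbb{E}_i[\overline{N}']\le\frac{M\log\overline{a}}{D(\mu\|\pi)}(1+o(1))$. Combining with $-\log P_{\rm{max}}\ge\log\overline{a}-O(1)$ yields $\mathbb{E}_i[\overline{N}]\le\frac{-\log P_{\rm{max}}}{D(\mu\|\pi)/M}(1+o(1))$, uniformly in $\mu\neq\pi$.

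\textbf{Main obstacle.} I expect the crux to be the expectation bound in the last step: one must show that the overshoot over the \emph{time–varying} threshold, the fluctuation of $D(\gamma_i^{(\ell)}\|\pi)$ about the \emph{unknown} $D(\mu\|\pi)$, and the spurious competing statistic $\max_{j\neq i}D(\gamma_j^{(\ell)}\|\pi)$ are all $o(\log\overline{a})$ at the crossing round $\ell\approx\log\overline{a}/D(\mu\|\pi)$, with summable tails, and to do so without any knowledge of $\mu$ and despite the coupling of all $M$ empirical distributions through $\hat{i}$ and the inner $\max_{j\neq\hat{i}}$ — precisely the difficulty met, and resolved by time–dependent thresholding, in the proof of Lemma~\ref{lem-1}. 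Here the round–robin structure does most of the work: the $M-1$ target–free locations furnish the union–bound slack baked into $M|\mathcal{Y}|\log(\ell+1)$ and keep $\max_{j\neq i}D(\gamma_j^{(\ell)}\|\pi)$ at the level $O((\log\ell)/\ell)$, so no additional margin of the $\ell^{2/3}$ type used in (\ref{eqn-time-upperthres-bin-test}) is needed.
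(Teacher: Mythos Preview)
The paper does not actually prove Proposition~\ref{prop-3}: it is stated as a quotation of a result from \citep{li-niti-veer-asilomar-2014}, with no argument supplied here. So there is no ``paper's own proof'' to compare against; what you have written is a reconstruction of the proof that the cited reference presumably contains.

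That said, your reconstruction is sound and follows the template the present paper uses for Lemma~\ref{lem-1}: bound the false-alarm and wrong-location probabilities via (\ref{eqn-prelim-fact3}) and the summable slack $M|\mathcal{Y}|\log(\ell+1)$ built into the threshold, bound the miss probability $\{\overline{\delta}=0\}$ by Markov together with the stopping-time bound, and control $\mathbb{E}_i[\overline{N}']$ by a first-passage argument with geometrically decaying tails past $\ell_0\approx\log\overline{a}/D(\mu\|\pi)$. Two small remarks. First, the statistic in (\ref{eqn-stoptime-nonadaptive}) as printed is missing the factor $\ell$ on the left; you have (correctly) supplied it, but it is worth flagging as a typo when you write this up. Second, your closing phrase ``uniformly in $\mu\neq\pi$'' overstates things: the $o(1)$ in the sample-size bound and the constants in the tail sums depend on $\mu$ through $D(\mu\|\pi)$ and the Sanov rate, so the conclusion is ``for every fixed $\mu\neq\pi$'' --- which is exactly what the proposition asserts.
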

In summary, adaptivity offers a multiplicative gain of $M$ for search reliability beyond non-adaptive searching.  This gain \tddred{increases} with the size of the area to be searched.

\appendix

\section{}

\subsection{Proof of Lemma \ref{lem-1}}
\label{pf-lem1}

The proof relies on the following lemmas.
\begin{lem}[\citep{li-niti-veer-ieeetit-2014}]
\label{lem-2}
For any pmfs $\mu, \pi$ on $\mathcal{Y}$ with full support, it holds that
\begin{align}
2 B \left( \mu, \pi \right) 
\ =\ \min\limits_{q} D \left( q \| \mu \right) + D \left( q \| \pi \right),
\end{align}
where the minimum above is over all pmfs on $\mathcal{Y}$.
\end{lem}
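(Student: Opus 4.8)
The plan is to prove Lemma~\ref{lem-2} by a direct ``completion of the square'' inside the exponent, which sidesteps any Lagrange-multiplier bookkeeping. Write $Z \triangleq \sum_{y \in \mathcal{Y}} \sqrt{\mu(y)\pi(y)}$ for the Bhattacharyya coefficient; since $\mu,\pi$ have full support, $Z \in (0,1]$ (the upper bound by Cauchy--Schwarz, with equality iff $\mu=\pi$), and the pmf $r$ on $\mathcal{Y}$ defined by $r(y) \triangleq \sqrt{\mu(y)\pi(y)}/Z$ also has full support. First I would fix an arbitrary pmf $q$ on $\mathcal{Y}$, merge the two relative entropies into a single sum, and substitute $\mu(y)\pi(y) = Z^2 r(y)^2$:
\begin{align}
D(q\|\mu) + D(q\|\pi)
\ =\ \sum_{y} q(y) \log \frac{q(y)^2}{\mu(y)\pi(y)}
\ =\ 2\sum_{y} q(y)\log\frac{q(y)}{r(y)} \ -\ 2\log Z
\ =\ 2\, D(q\|r) \ -\ 2\log Z. \nonumber
\end{align}

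From here the conclusion is immediate. By nonnegativity of relative entropy, $D(q\|\mu)+D(q\|\pi) \ge -2\log Z$ for every pmf $q$, with equality if and only if $q = r$; hence $\min_{q} \left[ D(q\|\mu)+D(q\|\pi) \right] = -2\log Z = -2\log \sum_{y}\sqrt{\mu(y)\pi(y)} = 2B(\mu,\pi)$, the last step being the definition of the Bhattacharyya distance $B(\mu,\pi) \triangleq -\log\sum_{y}\sqrt{\mu(y)\pi(y)}$. The full-support hypothesis on $\mu,\pi$ enters only to ensure that $r$ is a bona fide fully supported pmf, so that $D(q\|r)$ is finite, the optimizer lies in the relative interior of the simplex, and the identity $\mu(y)\pi(y)=Z^2 r(y)^2$ makes sense termwise.

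I do not expect a genuine obstacle. The single point that warrants a line of care is the algebraic rearrangement $\log\!\big(q(y)^2/(\mu(y)\pi(y))\big) = 2\log\!\big(q(y)/r(y)\big) - 2\log Z$ together with $\sum_{y} q(y) = 1$, which is what lets the constant $-2\log Z$ be pulled out of the sum; after that the argument is just $D(q\|r)\ge 0$. If instead one prefers to derive the optimizer from first principles rather than guess $r$, the alternative route is a KKT computation: $q\mapsto D(q\|\mu)+D(q\|\pi)$ is strictly convex, so the minimizer is unique; the stationarity condition forces $q^*(y)^2 \propto \mu(y)\pi(y)$; and a short boundary check (the directional derivative toward any coordinate where $q(y)=0$ tends to $-\infty$) excludes minimizers on the boundary of the simplex, recovering the same $q^*=r$ and the same value. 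I would present the ``completion of the square'' argument as the main proof and mention the KKT derivation only parenthetically.
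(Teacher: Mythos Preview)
Your argument is correct and self-contained: the identity $D(q\|\mu)+D(q\|\pi)=2D(q\|r)-2\log Z$ with $r(y)=\sqrt{\mu(y)\pi(y)}/Z$ is exactly the right decomposition, and the rest is just nonnegativity of relative entropy. Note, however, that the paper does not supply its own proof of this lemma at all; it simply quotes the result from \citep{li-niti-veer-ieeetit-2014} and uses it as a black box in the proof of Lemma~\ref{lem-3}. So there is nothing in the present paper to compare your proof against. Your write-up is in fact more informative than what the paper offers here, since it identifies the optimizer $q^*=r$ explicitly and makes clear that $B(\mu,\pi)=-\log\sum_y\sqrt{\mu(y)\pi(y)}$ is the intended definition (the paper never spells this out). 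The parenthetical KKT alternative is unnecessary; the one-line identity already delivers both the value and the uniqueness of the minimizer.
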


\begin{lem}  
\label{lem-3}
Under the alternative hypothesis, it holds for every $n \geq 1,$ that
\begin{align}
\mathbb{P}_1 \left [ \tilde{N}^b \geq n \right ]
&\ \leq\ 
a e^{-(n-1) 2 B \left( \mu, \pi \right)} n^{2 \vert \mathcal{Y} \vert}
e^{{\left( n-1 \right)}^{\frac{2}{3}}}.
\nonumber
\end{align}
\end{lem}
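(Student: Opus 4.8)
The plan is to unwind the definition of $\tilde N^b$ and then argue by the method of types recalled in Section \ref{sec-prelim}. Since $\tilde N^b \geq n$ forces the defining inequality in (\ref{eqn-time-upperthres-bin-test}) to fail at every index $m = 1, \ldots, n-1$, it must in particular fail at $m = n-1$, so
\begin{align}
\left\{ \tilde N^b \geq n \right\}\ \subseteq\ \left\{ (n-1)\, D\!\left( \gamma^{(n-1)} \,\middle\|\, \pi \right)\ \leq\ \log a + (n-1)^{\frac{2}{3}} + \vert \mathcal{Y} \vert \log n \right\},
\nonumber
\end{align}
where $\gamma^{(n-1)}$ is the empirical distribution of $Y^{n-1}$. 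It therefore suffices to bound the probability of the right-hand event under $\mathbb{P}_1$, i.e., when $Y_1, \ldots, Y_{n-1}$ are i.i.d.\ according to $\mu$. (For $n=1$ the claim is trivial, as the left side equals $1$ and $a>1$.)

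First I would partition the right-hand event according to the type $\gamma$ of $Y^{n-1}$. By (\ref{eqn-prelim-fact1}) and the standard bound $e^{(n-1)H(\gamma)}$ on the number of length-$(n-1)$ sequences of type $\gamma$, the $\mathbb{P}_1$-probability that $Y^{n-1}$ has type $\gamma$ is at most $e^{-(n-1) D(\gamma \| \mu)}$. On the event in question every contributing type satisfies $D(\gamma \| \pi) \leq \varepsilon_n$, where $\varepsilon_n \triangleq \bigl( \log a + (n-1)^{2/3} + \vert \mathcal{Y} \vert \log n \bigr)/(n-1)$; combining this with Lemma \ref{lem-2}, which gives $D(\gamma \| \mu) + D(\gamma \| \pi) \geq 2 B(\mu, \pi)$, yields $D(\gamma \| \mu) \geq 2 B(\mu, \pi) - \varepsilon_n$ for each such type. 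Hence the $\mathbb{P}_1$-probability of every contributing type is at most $e^{-(n-1)(2B(\mu,\pi) - \varepsilon_n)}$.

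Finally I would sum over types, using that the number of types of length $n-1$ over $\mathcal{Y}$ is at most $\bigl((n-1)+1\bigr)^{\vert \mathcal{Y}\vert} = n^{\vert \mathcal{Y}\vert}$, to get
\begin{align}
\mathbb{P}_1\!\left[ \tilde N^b \geq n \right]\ \leq\ n^{\vert\mathcal{Y}\vert}\, e^{-(n-1)\, 2 B(\mu,\pi)}\, e^{(n-1)\varepsilon_n}\ =\ n^{\vert\mathcal{Y}\vert}\, e^{-(n-1)\, 2 B(\mu,\pi)}\cdot a\, e^{(n-1)^{2/3}}\, n^{\vert\mathcal{Y}\vert},
\nonumber
\end{align}
since $(n-1)\varepsilon_n = \log a + (n-1)^{2/3} + \vert\mathcal{Y}\vert\log n$. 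This is precisely the asserted bound $a\, e^{-(n-1) 2 B(\mu,\pi)}\, n^{2\vert\mathcal{Y}\vert}\, e^{(n-1)^{2/3}}$. The argument is largely routine once Lemma \ref{lem-2} and the type-counting facts of Section \ref{sec-prelim} are in hand; the only points requiring care are the reduction of the union bound over $m = 1, \ldots, n-1$ to the single index $m = n-1$ (legitimate because $\{\tilde N^b \geq n\}$ is contained in the failure event at that index), and noting that the time-varying threshold $\log a + m^{2/3} + \vert\mathcal{Y}\vert\log(m+1)$ is engineered so that its value at $m = n-1$ exactly produces the factors $a$, $e^{(n-1)^{2/3}}$, and $n^{\vert\mathcal{Y}\vert}$ in the statement, with the remaining $n^{\vert\mathcal{Y}\vert}$ coming from the type count.
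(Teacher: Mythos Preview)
Your argument is correct and follows essentially the same route as the paper's proof: reduce $\{\tilde N^b\ge n\}$ to the failure of the threshold condition at time $n-1$, invoke Lemma~\ref{lem-2} to turn the constraint $D(\gamma\|\pi)\le\varepsilon_n$ into a lower bound on $D(\gamma\|\mu)$, and then apply the Sanov-type bound (\ref{eqn-prelim-fact3}) (which you unpack explicitly via the type count). The only cosmetic point is that your closing remark about ``the reduction of the union bound over $m=1,\ldots,n-1$'' is a slight misnomer: no union bound is needed, since $\{\tilde N^b\ge n\}$ is the \emph{intersection} of those failure events and hence is contained in each; but you already stated this correctly earlier in the proof.
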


\begin{proof}
\begin{align}
\mathbb{P}_1 \left [ \tilde{N}^b \geq n \right ]
&\leq 
\mathbb{P} \left [
(n-1) D \left( \gamma \| \pi \right) 
\ \leq\ 
\log{a} + {\left( n-1 \right)}^{\frac{2}{3}} + \vert \mathcal{Y} \vert \log{(n)}
\right ]	\nonumber \\
&= 
\mathbb{P} 
\left [
\begin{array}{cc}
D \left( \gamma \| \mu \right) 
\geq 
- \frac{\left(\log{a} + {\left( n-1 \right)}^{\frac{2}{3}} 
+ \vert \mathcal{Y} \vert \log{n}\right)}{n-1}  
+ D \left( \gamma \| \mu \right)  + D \left( \gamma \| \pi \right)
\end{array}
\right ]	
\nonumber	\\
&\leq 
\mathbb{P} \left [
D \left( \gamma \| \mu \right) 
\geq 
- \frac{\left(\log{a} + {\left( n-1 \right)}^{\frac{2}{3}} 
+ \vert \mathcal{Y} \vert \log{n}\right)}{n-1}
+ 2 B \left( \mu, \pi \right)
\right ]
\nonumber \\
&
\leq
a e^{-(n-1) 2 B \left( \mu, \pi \right)} 
n^{2\vert \mathcal{Y} \vert}
e^{{\left( n-1 \right)}^{\frac{2}{3}}},
\nonumber
\end{align}
where the second inequality follows from Lemma \ref{lem-2} and the last inequality follows from (\ref{eqn-prelim-fact3}).
\end{proof}

First, we prove (\ref{eqn-falsealarm-USPRT}).  It follows from (\ref{eqn-stoptime-bin-test}), (\ref{eqn-decision-bin-test}) that
\begin{align}
\mathbb{P}_0 \left [ \delta = 1 \right ]
&\ =\ 
\mathbb{P}_0 \left [ N^b = \tilde{N}^b \right ]
\nonumber \\
&\ \leq\
\mathbb{P}_0 \left [ \tilde{N}^b \leq a \left( \log{a} \right)^{\rho_1} \right ]	\nonumber \\
&\ =\ 
\mathbb{P}_0 \left [ \tilde{N}^b \mbox{~is~finite} \right ]	
\nonumber \\
&\ =\ 
\sum\limits_{n=1}^{\infty}
\mathbb{P}_0 \left [ \tilde{N}^b = n \right ]		
\nonumber \\
&\ =\ 
\sum\limits_{n=1}^{\infty}
\mathbb{P}_0 
\left [
n D \left( \gamma \| \pi \right) >
\left( \log{a} + n^{\frac{2}{3}} + \vert \mathcal{Y} \vert \log{(n+1)} \right)			
\right ]		
\nonumber \\
&\ \leq\
\sum\limits_{n=1}^{\infty}
\frac{1}{a} e^{-n^{\frac{2}{3}}}	\label{eqn-Pf-Lem1-1} \\
&\ \leq\ \frac{1}{a},	\nonumber
\end{align}
where (\ref{eqn-Pf-Lem1-1}) follows from (\ref{eqn-prelim-fact2}), (\ref{eqn-prelim-fact3}) and the union bound over the set of all possible empirical distributions.  

It \tddred{now remains} to prove the \tddred{second} inequality in (\ref{eqn-cost-USPRT}), as the equality in (\ref{eqn-cost-USPRT-2}) follows just from the definition of $N^b$ in \tddred{(\ref{eqn-stoptime-bin-test}),} and (\ref{eqn-misdetect-USPRT}) follows from the \tddred{second} inequality in (\ref{eqn-cost-USPRT}).  To this end, it suffices to show that under the alternative hypothesis and as $a \rightarrow \infty,$ 
\begin{align}
\frac{\mathbb{E}_1 \left [\tilde{N}^b \right ]}{\log{a}} \ \rightarrow\ 
\frac{1}{D \left( \mu \| \pi \right)}.
\label{eqn-Pf-Lem1-2} 
\end{align}
First observe that under the alternative hypothesis, $ \| \gamma - \mu \|_1 \rightarrow 0$ a.s.  Since the support of $\mu$ is subsumed in the support of $\pi,\ D \left( \cdot \| \pi \right)$ is continuous in its first argument, and, hence, $D \left( \gamma \| \pi \right) \rightarrow D \left( \mu \| \pi \right)$ a.s.  It then follows from the definition of $\tilde{N}^b$ in (\ref{eqn-time-upperthres-bin-test}) that
\begin{align}
D \left( \gamma^{\tilde{N}^b} \| \pi \right) 
&\ >\ 
\frac{ \log{a} + \left( \tilde{N}^b \right)^{\frac{2}{3}}  
+  \vert \mathcal{Y} \vert \log{\left({\tilde{N}^b}+1 \right)} }
{\tilde{N}^b}
\label{eqn-Pf-Lem1-3}  \\
&\ \leq\ 
\frac{ \log{a} + {\left( \tilde{N}^b - 1 \right)}^{\frac{2}{3}}  
+  \vert \mathcal{Y} \vert \log{\left({\tilde{N}^b} \right)} }
{\tilde{N}^b - 1}.
\label{eqn-Pf-Lem1-4} 
\end{align}
Next, by observing that for any distribution $q,\ D \left( q \| \pi \right) \leq \log 
\left( \frac{1}{\min\limits_{y} \pi(y)} \right),$ we get that 
\begin{align}
\mathbb{P}_1 \left [ \tilde{N}^b \leq n \right ]
&\ \leq\  \mathbb{P}_1 \left [	
n \left( \frac{1}{\min\limits_{y} \pi(y)} \right)  \geq \log{a}
\right ]
\nonumber \\
&\rightarrow\ 0\ \mbox{a.s.,~as}\ a \rightarrow \infty,	\nonumber
\end{align} 
thereby yielding that $\tilde{N}^b \rightarrow \infty$ \tddred{a.s., because, by its definition,}
$\tilde{N}^b$ is non-decreasing.  We now get from this and (\ref{eqn-Pf-Lem1-2}), 
(\ref{eqn-Pf-Lem1-3}), (\ref{eqn-Pf-Lem1-4}) that 
\begin{align}
\frac{\tilde{N}^b}{\log{a}} \ \stackrel{a.s.}{\rightarrow}\ \frac{1}{D \left( \mu \| \pi \right)}.
\label{eqn-Pf-Lem1-5} 
\end{align}

To go from convergence a.s. (\ref{eqn-Pf-Lem1-5}) to convergence in mean (\ref{eqn-Pf-Lem1-2}), it now suffices to prove that the sequence of rvs $\frac{\tilde{N}^b}{\log{a}}$ is uniformly integrable as $a \rightarrow \infty$.  To this end, for any $\eta > 0,$ sufficiently large, we shall upper bound the following quantity using Lemma \ref{lem-3} as follows.
\begin{align}
\mathbb{E}_1
\left [\frac{\tilde{N}^b}{\log{a}}~
\mathbb{I}_{\left \{ \frac{\tilde{N}^b}{\log{T}} \geq \eta \right \}}
\right ]
&\leq 
\mathbb{E}_1
\left [\frac{\left( \tilde{N}^b - \lfloor \eta \log{a} \rfloor + \eta \log{a} \right) }{\log{a}}~
\mathbb{I}_{\left \{ \tilde{N}^b \geq \lfloor \eta \log{a} \rfloor \right \}}
\right ]
\nonumber \\
&\leq 
\frac{1}{\log{a}}
\mathbb{E}_1 \left [
\left(  \tilde{N}^b - \lfloor \eta \log{a}  \rfloor \right)~
\mathbb{I}_{\left \{ \tilde{N}^b - \lfloor \eta \log{a}  \rfloor ~\geq~ 0 \right \}}
\right ] 	\nonumber \\
&\ \ \ \ \ ~+~ \frac{\eta \log{a}}{\log{a}} 
\mathbb{P}_1 
\left [ \tilde{N}^b \geq \lfloor \eta \log{a}  \rfloor \right ]	
\nonumber \\
&= 
\frac{1}{\log{a}} \sum\limits_{l = 1}^{\infty} 
\mathbb{P}_1 
\left [ \tilde{N}^b \geq \lfloor \eta \log{a}  \rfloor + l \right ]	
\nonumber \\
&\ \ \ \ \ ~+~ \eta 
\mathbb{P}_1 
\left [ \tilde{N}^b \geq \lfloor \eta \log{a}  \rfloor \right ]
\nonumber\\
&\leq
\frac{a}{\log{a}} \sum\limits_{l = 1}^{\infty}
\left (
\begin{array}{cc}
e^{
- \left( \eta \log{a} + l - 2 \right) 2 B \left( \mu, \pi \right)
+ \left( \eta \log{a} + l \right)^{\frac{2}{3}} 
}\\ 
\times \left( \lfloor \eta \log{a} \rfloor + l \right)^{2 \vert \mathcal{Y} \vert} 	
\end{array}
\right)
\nonumber \\
& \ \ \ \ \ 
+~ \eta a  
e^{
- \left( \eta \log{a} - 2  \right) 2 B \left( \mu, \pi \right)
+ \left( \eta \log{a}   \right)^{\frac{2}{3}}
} 
\left( \lfloor \eta \log{a} \rfloor \right)^{2 \vert \mathcal{Y} \vert},		\nonumber \\
&\leq
\frac{a}{\log{a}} \sum\limits_{l = 1}^{\infty}
e^{
- \left( \eta \log{a} + l - 4 \right) B \left( \mu, \pi \right)
} 
\left( \lfloor \eta \log{a} \rfloor + l \right)^{2 \vert \mathcal{Y} \vert} 	\nonumber \\
& \ \ \ \ \ 
+~ \eta a  
e^{
- \left( \eta \log{a} - 4  \right) B \left( \mu, \pi \right)
} 
\left( \lfloor \eta \log{a} \rfloor \right)^{2 \vert \mathcal{Y} \vert},
\label{eqn-Pf-Lem1-6}
\end{align}
for any $\eta > \frac{1}{B \left( \mu, \pi \right)}$ and $a$ sufficiently large such that
$(\eta \log{a} ) B (\mu, \pi) \geq \left( \eta \log{a} \right)^{\frac{2}{3}}$.

Continuing from (\ref{eqn-Pf-Lem1-6}), upon noting that for $a$ sufficiently large, it holds that $\lfloor \eta \log{a} \rfloor + l \leq 2 \lfloor \eta \log{a} \rfloor l$, we get
\begin{align}
\mathbb{E}_1
\left [\frac{\tilde{N}^b}{\log{a}}~
\mathbb{I}_{\left \{ \frac{\tilde{N}^b}{\log{a}} \geq \eta 
\right \}}
\right ]
&\ \leq
\frac{a}{\log{a}} \sum\limits_{l = 1}^{\infty}
 e^{- \left( \eta \log{a}  + l - 4 \right) B \left( \mu, \pi \right)} 
\left( 2 \lfloor \eta \log{a}  \rfloor  l \right)^{2 \vert \mathcal{Y} \vert} \nonumber \\
&\ \ \ \ +~ \eta a e^{- \left( \eta \log{a} - 4 \right) B \left( \mu, \pi \right)} 
\left( \lfloor \eta \log{a} \rfloor \right)^{2 \vert \mathcal{Y} \vert}.
\nonumber \\
&\ =\ 
\frac{a}{\log{a}}
\left( \lfloor \eta \log{a}  \rfloor  \right)^{2 \vert \mathcal{Y} \vert}  
e^{- \eta B \left( \mu, \pi \right) \log{a}} 	\nonumber \\
&\ \ \ \ \ \ \times 
\left(
e^{4B \left( \mu, \pi \right)} \sum\limits_{l = 1}^{\infty} 
e^{- B \left( \mu, \pi \right) l}~l^{2 \vert \mathcal{Y} \vert}
\right)
\nonumber \\
&\ \ \ \ \ \  +\ 
\eta a \left( \lfloor \eta \log{a} \rfloor  \right)^{2 \vert \mathcal{Y} \vert}  
e^{- \eta B \left( \mu, \pi \right) \log{a}} 
\times 
e^{4B \left( \mu, \pi \right)},
\nonumber 
\end{align}
which vanishes as $a \rightarrow \infty,$ for any $\eta > \frac{1}{B \left( \mu, \pi \right)},$ thereby establishing the uniform integrability. 

\subsection{Proof of Theorem \ref{thm-1}}

The proof of Theorem \ref{thm-1} relies on the following two lemmas.

\begin{lem}  
\label{lem-4}
For any $\gamma > 2,$ when the parameter $b$ used in (\ref{eqn-ctrl-explore-DesBinExp}) is selected to be sufficiently close to 1, it holds for any true non-null hypothesis $i = 1, \ldots, M,$ and any $\epsilon > 0,$ that the first time $T$ \tddred{from which} the estimate $\hat{i}$ in (\ref{eqn-mainalg-ML-estimation}) always equals the true hypothesis $i,$ satisfies
\begin{align}
\mathbb{P}_i \left [ T > \epsilon t \right ] 
= O \left( t^{-\gamma} \right).
\label{eqn-lem-4}
\end{align}
\end{lem}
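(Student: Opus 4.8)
The plan is to control $\{T>\epsilon t\}$ by a union bound over the single‑time mis‑estimation events and to show that each of those has probability decaying polynomially in its time index with an exponent exceeding $\gamma+1$. Writing $\hat{i}_s$ for the estimate in (\ref{eqn-mainalg-ML-estimation}) computed at time $s$, the definition of $T$ gives $\{T>\epsilon t\}\subseteq\bigcup_{s\ge\lceil\epsilon t\rceil}\{\hat{i}_s\neq i\}$, so $\mathbb{P}_i[T>\epsilon t]\le\sum_{s\ge\lceil\epsilon t\rceil}\mathbb{P}_i[\hat{i}_s\neq i]$; it therefore suffices to establish $\mathbb{P}_i[\hat{i}_s\neq i]=O(s^{-\lambda})$ for some $\lambda>\gamma+1$.

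From (\ref{eqn-mainalg-ML-estimation}), if $N_s(i,1)-N_s(i,0)>0$ and $N_s(j,1)-N_s(j,0)\le 0$ for every $j\neq i$ (where $N_s(\ell,1),N_s(\ell,0)$ denote the tallies at time $s$), then $\hat{i}_s=i$; hence
\[
\{\hat{i}_s\neq i\}\ \subseteq\ \{N_s(i,1)-N_s(i,0)\le 0\}\ \cup\ \bigcup_{j\neq i}\{N_s(j,1)-N_s(j,0)>0\}.
\]
Two structural facts bound the right‑hand side. First, the round‑robin exploration (\ref{eqn-ctrl-explore-DesBinExp}) occurs at the occasions $t=\lfloor e^{bk}\rfloor$, $k=0,1,\dots$, which are distinct times when $b$ is close to $1$; there are at least $\log(s+1)/b$ of them up to time $s$, so every location is searched at least $n_s:=\lfloor\log(s+1)/(bM)\rfloor-1$ times by time $s$, deterministically and regardless of the (possibly still‑erroneous) history. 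Second, because the successive inner binary tests of (\ref{eqn-time-upperthres-bin-test})--(\ref{eqn-decision-bin-test}) use disjoint blocks of observations, the family of inner‑test outcomes indexed by (location, visit number) is mutually independent, the outcome at the $m$‑th visit to location $\ell$ being distributed as an inner‑test decision under $H_1$ if $\ell=i$ (so it equals $0$ with probability $\beta_a$) and under $H_0$ if $\ell\neq i$ (so it equals $1$ with probability $\alpha_a$), with $\alpha_a,\beta_a$ as in Lemma \ref{lem-1}.

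Combining these facts, the event $\{N_s(i,1)-N_s(i,0)\le 0\}$ forces at least half of the first $n_i(s)\in[n_s,s]$ outcomes at location $i$ to equal $0$, so a union bound over the count $m\in[n_s,s]$ together with the elementary tail estimate $\mathbb{P}[\mathrm{Bin}(m,p)\ge m/2]\le(4p)^{m/2}$ give, for $a$ large enough that $4\max(\alpha_a,\beta_a)<1$,
\[
\mathbb{P}_i[N_s(i,1)-N_s(i,0)\le 0]\ \le\ \sum_{m\ge n_s}(4\beta_a)^{m/2}\ \le\ \frac{(4\beta_a)^{n_s/2}}{1-\sqrt{4\beta_a}},
\]
and similarly $\mathbb{P}_i[N_s(j,1)-N_s(j,0)>0]\le(4\alpha_a)^{n_s/2}/(1-\sqrt{4\alpha_a})$ for each $j\neq i$. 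Hence $\mathbb{P}_i[\hat{i}_s\neq i]\le C_a\,(4\max(\alpha_a,\beta_a))^{n_s/2}$ with $C_a$ depending only on $a,M$; inserting $n_s\ge\log(s+1)/(bM)-2$ and $\max(\alpha_a,\beta_a)\le 1/a$ (valid for large $a$ by (\ref{eqn-falsealarm-USPRT}) and (\ref{eqn-misdetect-USPRT})) yields $\mathbb{P}_i[\hat{i}_s\neq i]\le C'_a\,(s+1)^{-\lambda_a}$ with $\lambda_a\ge\log(a/4)/(2bM)$. Since $\lambda_a\to\infty$ as $a\to\infty$, once $a$ is fixed large enough (depending on $\gamma,b,M$) we have $\lambda_a>\gamma+1$, and then $\sum_{s\ge\lceil\epsilon t\rceil}(s+1)^{-\lambda_a}=O\!\left(t^{-(\lambda_a-1)}\right)=O(t^{-\gamma})$, which is (\ref{eqn-lem-4}).

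The point requiring care is that the exploration schedule only forces $\Theta(\log s)$ searches of each location by time $s$, so the one‑step error decays merely polynomially, at a rate proportional to $1/(bM)$; with $\alpha_a,\beta_a$ only bounded, this rate cannot be pushed beyond $\gamma+1$ while $b$ stays near $1$. The resolution is to exploit that Lemma \ref{lem-1} drives $\alpha_a,\beta_a\to 0$, so that it is the Bernoulli‑tail exponent of each outcome count, and not merely the number of samples, that grows, carrying $\lambda_a$ past any prescribed level. A secondary technical point is justifying the conditional‑independence (Wald‑type) structure that makes the successive inner‑test outcomes at a fixed location genuinely i.i.d.\ despite the fully adaptive search policy.
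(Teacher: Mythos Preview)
Your argument is correct but follows a genuinely different route from the paper's. The paper bounds $\mathbb{P}_i[\hat{i}_\ell\neq i]$ by a Chernoff/exponential-moment inequality applied directly to the mismatched log-likelihood ratio $\sum_{k\le\ell}\log\bigl(\overline{p}_i^{\,U_k}(Z_k)/\overline{p}_j^{\,U_k}(Z_k)\bigr)$: each conditional factor $\mathbb{E}_i\bigl[e^{-\frac12\log(\cdot)}\mid U_k\bigr]$ is shown to be at most $1$ always, and at most $2/\sqrt{a-1}$ whenever $U_k\in\{i,j\}$; since the exploration schedule guarantees at least $\sim\log\ell/(bM)$ such visits, the product is at most $(2/\sqrt{a-1})^{\log\ell/(bM)}$, which is polynomially small in $\ell$. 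This martingale-type conditioning absorbs the adaptive policy automatically, with no need to track the random visit counts. Your approach is instead combinatorial: you pre-generate the i.i.d.\ array $Z_{(\ell,m)}$, union-bound over the (random, policy-dependent) number of visits to each location, and apply a direct binomial-tail estimate. Both methods land on essentially the same polynomial exponent, of order $\log(a)/(2bM)$; yours is more elementary, the paper's is cleaner in its handling of adaptivity.

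One point of friction: to push your exponent $\lambda_a$ past $\gamma+1$ you invoke ``$a$ large enough,'' whereas the lemma as stated treats $a$ as given and only lets you tune $b$. The paper resolves this by sending $b\to 0$ (the phrase ``close to $1$'' in the lemma statement is a slip; the paper's own proof and Theorem~\ref{thm-1} both take $b$ small), which makes the exponent large for any fixed $a>5$. Your combinatorial bound supports the same conclusion without modification: with $b$ small one has $n_s\sim\log s/(bM)$, and $(4\max(\alpha_a,\beta_a))^{n_s/2}$ already decays at the required polynomial rate without forcing $a\to\infty$. So the only adjustment needed is to tune $b$ rather than $a$.
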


\begin{proof}
We first note that for any other hypothesis $j \neq i,\ \ell \geq \lfloor \epsilon t \rfloor,$
\begin{align}
\mathbb{P}_ i 
\left [
\sum\limits_{k=1}^{\ell}
\log{\left( 
\frac{\overline{p}_i^{U_k}\left( Z_k \right)}
{\overline{p}_j^{U_k}\left( Z_k \right)}
\right)} \ \leq\ 0
\right ]
\ \leq\ 
\mathbb{E}_i
\left [
e^{
-\frac{1}{2}
\left(
\sum\limits_{k=1}^{\ell}
\log{\left( 
\frac{\overline{p}_i^{U_k}\left( Z_k \right)}
{\overline{p}_j^{U_k}\left( Z_k \right)}
\right)} 
\right)
}
\right ].
\label{eqn-pf-Lem-4-1}
\end{align}

First, note that for any time $k$ when the estimation in 
(\ref{eqn-mainalg-ML-estimation}) yields $\hat{i} = s \neq i,\ s \neq j,$ we get from (\ref{eqn-reciprocity-mubar-pibar}), 
(\ref{eqn-search-mismatched-model-as-bin-exper-design}) 
that
\begin{align}
\mathbb{E}_i
\left [
e^{
-\frac{1}{2}
\left(
\log{\left( 
\frac{\overline{p}_i^{U_k}\left( Z_k \right)}
{\overline{p}_j^{U_k}\left( Z_k \right)}
\right)} 
\right)
}
\bigg \vert U_k = s
\right ]  
\ =\ 
\mathbb{E}_i
\left [
e^{
-\frac{1}{2}
\left(
\log{1} 
\right)
}
\bigg \vert U_k = s
\right ]  
\ =\ 1\ \mbox{a.s.}
\label{eqn-pf-Lem-4-2}
\end{align}
On the other hand, for the time $k$ when the estimation in (\ref{eqn-mainalg-ML-estimation}) yields $\hat{i} = i,$ or $\hat{i} = j,$ we get from (\ref{eqn-reciprocity-mubar-pibar}), 
(\ref{eqn-search-mismatched-model-as-bin-exper-design})  that
\begin{align}
\mathbb{E}_i
\left [
e^{
-\frac{1}{2}
\left(
\log{\left( 
\frac{\overline{p}_i^{U_k}\left( Z_k \right)}
{\overline{p}_j^{U_k}\left( Z_k \right)}
\right)} 
\right)
}
\bigg \vert U_k = i
\right ]  
&\ =\ 
\beta_a e^{-\frac{1}{2} \log{\left(\frac{1}{a-1}\right)}}
+ \left( 1- \beta_a \right) e^{-\frac{1}{2} \log{\left(a-1\right)}};
\nonumber \\
&\ =\ 
\frac{\beta_a \left( a-1 \right) + \left( 1 - \beta_a \right)}
{\sqrt{\left( a - 1 \right)}}	
\nonumber \\
&\ =\ 
\frac{\beta_a  a +  1 - 2 \beta_a}
{\sqrt{\left( a - 1 \right)}}	
\ \leq\ \frac{2}{\sqrt{\left( a - 1 \right)}}
\ <\ 1;
\label{eqn-pf-Lem-4-3}
\end{align}
\begin{align}
\mathbb{E}_i
\left [
e^{
-\frac{1}{2}
\left(
\log{\left( 
\frac{\overline{p}_i^{U_k}\left( Z_k \right)}
{\overline{p}_j^{U_k}\left( Z_k \right)}
\right)} 
\right)
}
\bigg \vert U_k = j
\right ]  
&\ =\ 
\alpha_a e^{-\frac{1}{2} \log{\left(\frac{1}{a-1}\right)}}
+ \left( 1- \alpha_a \right) e^{-\frac{1}{2} \log{\left(a-1\right)}}
\nonumber \\
&\ \leq\ \frac{2}{\sqrt{\left( a - 1 \right)}} \ <\ 1,
\label{eqn-pf-Lem-4-4}
\end{align}
for $a > 5$, and where the inequalities in (\ref{eqn-pf-Lem-4-3}) and (\ref{eqn-pf-Lem-4-4}) follow from (\ref{eqn-more-noisy-cond}).

Similarly, we get that for any time $t$ when the estimation in 
(\ref{eqn-mainalg-ML-estimation}) yields $\hat{i} = s \neq i,$ we get from (\ref{eqn-reciprocity-mubar-pibar}), 
(\ref{eqn-search-mismatched-model-as-bin-exper-design}) 
that
\begin{align}
\mathbb{E}_i
\left [
e^{
-\frac{1}{2}
\left(
\log{\left( 
\frac{\overline{p}_i^{U_k}\left( Z_k \right)}
{\overline{p}_0^{U_k}\left( Z_k \right)}
\right)} 
\right)
}
\bigg \vert U_k = s
\right ]  
\ =\ 1\ \mbox{a.s.}
\label{eqn-pf-Lem-4-5}
\end{align}
On the other hand, for the time $t$ when the estimation in (\ref{eqn-mainalg-ML-estimation}) yields $\hat{i} = i,$ we get from (\ref{eqn-reciprocity-mubar-pibar}), 
(\ref{eqn-search-mismatched-model-as-bin-exper-design})  that
\begin{align}
\mathbb{E}_i
\left [
e^{
-\frac{1}{2}
\left(
\log{\left( 
\frac{\overline{p}_i^{U_k}\left( Z_k \right)}
{\overline{p}_0^{U_k}\left( Z_k \right)}
\right)} 
\right)
}
\bigg \vert U_k = i
\right ]  
&\ \leq\ \frac{2}{\sqrt{\left( a - 1 \right)}} \ <\ 1,
\label{eqn-pf-Lem-4-6}
\end{align}
for $a > 5.$

Consequently, we get from (\ref{eqn-ctrl-explore-DesBinExp}), 
(\ref{eqn-pf-Lem-4-3}) and (\ref{eqn-pf-Lem-4-4}) that
\begin{align}
\mathbb{P}_i \left [ T > \epsilon t \right ]
&\ \leq\ 
\sum\limits_{\ell = \lfloor \epsilon t \rfloor}^{\infty}
\mathbb{P}_i 
\left [
\overline{p}_i \left( Z^{\ell} \right) \ \leq\ 
\overline{p}_j \left( Z^{\ell} \right) 
\right ] \ +\ 
\mathbb{P}_i 
\left [
\overline{p}_i \left( Z^{\ell} \right) \ \leq\ 
\overline{p}_0 \left( Z^{\ell} \right) 
\right ]
\nonumber \\
&\ \leq\ 
\sum\limits_{\ell = \lfloor \epsilon t \rfloor}^{\infty}
\left( \frac{2}{\sqrt{a-1}} \right)^{\frac{2\log{\ell}}{Mb}}
\ +\ 
\left( \frac{2}{\sqrt{a-1}} \right)^{\frac{\log{\ell}}{M b}}
\nonumber \\
&\ =\  O \left( t^{-\gamma} \right),
\nonumber
\end{align}
for $a > 5$ and for $b$ used in (\ref{eqn-ctrl-explore-DesBinExp}) chosen sufficiently close to 0.
\end{proof}

For $\nu < 1$ and $\rho_1$ as in Lemma \ref{lem-1} and  $a$ sufficiently large, let 
\begin{align}
\overline{c} \left( i, u \right) \ \triangleq\ 
\left \{
\begin{array}{cc}
\frac{\log{a}}{\nu D \left( \mu \| \pi \right)} ~<~a (\log{a} )^{\rho_1}, 
& i = 1, \ldots, M,\ u = i	\\
a (\log{a} )^{\rho_1},	& i = 1, \ldots, M,\  u \neq i.
\end{array}
\right.
\label{eqn-Lem-5-pream-0}
\end{align}
that are the upper bounds for $c_a, \kappa_a,$ as in Lemma \ref{lem-1}, respectively.  Let us consider the ``true'' model for 
sequential design of binary-output experiments specified as 
(\ref{eqn-search-model-as-bin-exper-design-0}),
(\ref{eqn-search-model-as-bin-exper-design}), induced by using the sequential binary test (\ref{eqn-time-upperthres-bin-test}), (\ref{eqn-stoptime-bin-test}), (\ref{eqn-decision-bin-test}) as the ``inner'' test at each location.  Also consider the ``mismatched'' model (for $a$ sufficiently large) as in 
(\ref{eqn-reciprocity-mubar-pibar}), (\ref{eqn-search-mismatched-model-as-bin-exper-design}) satisfying (\ref{eqn-more-noisy-cond}) (cf. Lemma \ref{lem-1}).  In addition, for $i = 1, \ldots, M,$ let
\begin{align}
\overline{d}^a_i
&\ =\ 
\frac{
\mu_b(0) 
\log{\left( \frac{\overline{\mu}_b(0)}{\overline{\pi}_b(0)} \right)}
+ \mu_b(1) 
\log{\left( \frac{\overline{\mu}_b(1)}{\overline{\pi}_b(1)} \right)}
}
{\frac{\log{a}}{\nu D \left( \mu \| \pi \right)}}
\label{eqn-Lem-5-pream} \\
&\ =\ 
\frac{
\beta_a 
\log{\left( \frac{1}{a-1} \right)}
+ \left( 1 - \beta_a \right) 
\log{\left( a-1 \right)}
}
{\frac{\log{a}}{\nu D \left( \mu \| \pi \right)}}
\nonumber \\
&\ \rightarrow\  \nu D \left( \mu \| \pi \right),\  
\mbox{as}~a \rightarrow \infty.
\nonumber 
\end{align}
Then, we have the following lemma.
\begin{lem}
\label{lem-5}
When the causal control policy 
(\ref{eqn-mainalg-ML-estimation}), (\ref{eqn-ctrl-explore-DesBinExp}), (\ref{eqn-ctrl-exploit-DesBinExp-nonnull}), (\ref{eqn-ctrl-exploit-DesBinExp-0}) is applied perpetually, it holds for any true non-null hypothesis $i \in [M],$ any other hypothesis $j = 0, 1, \ldots, M,\ j \neq i$, any small $\epsilon' > 0,$ any $\gamma > 2$ and all $t$ sufficiently large that
\begin{align}
\mathbb{P}_i \left [
\log{\left( \frac{\overline{p}_i \left( Z^t \right)}
{\overline{p}_j \left( Z^t \right)} \right)}
\ <\ 
\left( \sum\limits_{\ell = 1}^t \overline{c} \left( i, U_{\ell} \right) \right) 
\left( \overline{d}^a_i - \epsilon' \right)
\right ]
\ =\ O \left( t^{-\gamma} \right).
\end{align}
\end{lem}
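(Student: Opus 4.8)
The plan is to obtain a lower bound on the log-likelihood ratio $\log\bigl(\overline{p}_i(Z^t)/\overline{p}_j(Z^t)\bigr)$ by decomposing the sum over the $t$ binary experiments according to which location was searched at each step. Since the control policy specifies $U_\ell$ as a deterministic function of the running estimate, we may write
\begin{align}
\log\left(\frac{\overline{p}_i(Z^t)}{\overline{p}_j(Z^t)}\right)
\ =\ \sum_{\ell=1}^t \log\left(\frac{\overline{p}_i^{U_\ell}(Z_\ell)}{\overline{p}_j^{U_\ell}(Z_\ell)}\right).
\nonumber
\end{align}
By the reciprocity (\ref{eqn-reciprocity-mubar-pibar}) and the mismatched model (\ref{eqn-search-mismatched-model-as-bin-exper-design}), each summand is $0$ whenever $U_\ell\notin\{i,j\}$, equals $\pm\log(a-1)$ (with signs depending on $Z_\ell$) when $U_\ell=i$, and equals $\mp\log(a-1)$ when $U_\ell=j$. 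So the log-likelihood ratio is controlled by the behaviour of the binary outcomes at locations $i$ and $j$ only.

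Next I would invoke Lemma \ref{lem-4}: with $b$ chosen close to $1$ (really close to $0$ in the normalization used in the proof of Lemma~\ref{lem-4}), the estimate $\hat i$ equals the true hypothesis $i$ from some random time $T$ onward, and $\mathbb{P}_i[T>\epsilon t]=O(t^{-\gamma})$. On the event $\{T\le \epsilon t\}$, for all $\ell\in(\epsilon t,t]$ the policy (\ref{eqn-ctrl-exploit-DesBinExp-nonnull}) searches location $i$ (outside the sparse exploration occasions $\ell=\lfloor e^{bk}\rfloor$, which number only $O(\log t)$ and contribute negligibly). Hence, up to the $O(t^{-\gamma})$ exceptional probability and an $o\bigl(\sum_\ell \overline c(i,U_\ell)\bigr)$ correction, the number of times location $i$ is searched is $t(1+o(1))$, the number of times location $j$ is searched is $o(t)$, and $\sum_{\ell=1}^t \overline c(i,U_\ell) = t\cdot\frac{\log a}{\nu D(\mu\|\pi)}(1+o(1))$ by (\ref{eqn-Lem-5-pream-0}). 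Among the $\approx t$ searches at location $i$, the binary test produces outcome $1$ with the true probability $1-\beta_a$ and outcome $0$ with probability $\beta_a$ (this is the \emph{true} model (\ref{eqn-search-model-as-bin-exper-design-0}), since $i$ is the true target), so each such summand has conditional mean $\beta_a\log\frac{1}{a-1}+(1-\beta_a)\log(a-1)$, which is exactly the numerator of $\overline d^a_i$ in (\ref{eqn-Lem-5-pream}) after dividing by $\frac{\log a}{\nu D(\mu\|\pi)}$.

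The remaining work is a concentration argument: I would show that $\sum_{\ell: U_\ell=i}\log\bigl(\overline p_i^{i}(Z_\ell)/\overline p_j^{i}(Z_\ell)\bigr)$ — a sum of bounded i.i.d.-like terms each taking values in $\{\pm\log(a-1)\}$ conditioned on $U_\ell=i$ — does not fall below its mean by more than $\epsilon'\sum_\ell\overline c(i,U_\ell)$ except with probability $O(t^{-\gamma})$, and similarly that the contribution from location-$j$ searches and from the sparse exploration steps is $o\bigl(\sum_\ell\overline c(i,U_\ell)\bigr)$ with the same exceptional probability. A Chernoff/Hoeffding bound on the bounded increments gives a bound of the form $\exp(-c\,t/\log a \cdot \epsilon'^2)$ per fixed number of $i$-searches; since $t/\log a\to\infty$, this is $o(t^{-\gamma})$ for every $\gamma$, and after a union bound over the (at most $t$) possible values of the number of $i$-searches the total remains $O(t^{-\gamma})$. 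Combining the event $\{T\le\epsilon t\}$, the concentration event, and the deterministic identities above yields the claim, with $\overline c(i,U_\ell)$ playing the role of a worst-case cost surrogate that is asymptotically tight by (\ref{eqn-cost-USPRT}), (\ref{eqn-cost-USPRTb}). The main obstacle I anticipate is bookkeeping the several $o(1)$ and $O(t^{-\gamma})$ error terms uniformly — in particular making sure the concentration exponent genuinely beats every polynomial rate $t^{-\gamma}$ (this is where $t/\log a\to\infty$ is essential) and that the stray contributions of the $O(\log t)$ round-robin exploration steps and of the pre-$T$ prefix are absorbed into the $\epsilon'$ slack rather than the probability bound.
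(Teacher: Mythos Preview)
Your proposal is essentially correct and follows the same overall strategy as the paper: use Lemma~\ref{lem-4} to guarantee that, off an event of probability $O(t^{-\gamma})$, the control policy searches location $i$ from time $\epsilon t$ onward, and then use a Chernoff-type concentration bound to control the fluctuation of the log-likelihood ratio around its target growth rate.

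The paper organizes the argument a little differently, via the decomposition
\begin{align}
\log\frac{\overline p_i(Z^t)}{\overline p_j(Z^t)} - \Bigl(\sum_\ell \overline c(i,U_\ell)\Bigr)\overline d_i^a
&= \underbrace{\sum_{\ell=1}^t \Bigl(\log\tfrac{\overline p_i^{U_\ell}(Z_\ell)}{\overline p_j^{U_\ell}(Z_\ell)} - \mathbb{E}_i\bigl[\cdot\,\big|\,U_\ell\bigr]\Bigr)}_{\text{martingale}} \nonumber\\
&\quad + \underbrace{\sum_{\ell=1}^t \Bigl(\mathbb{E}_i\bigl[\cdot\,\big|\,U_\ell\bigr] - \overline d_i^a\,\overline c(i,U_\ell)\Bigr)}_{\text{deterministic given }U^t},
\nonumber
\end{align}
bounding the first part by a martingale Chernoff argument (exponentially small in $t$ since $a$ is fixed here) and the second by Lemma~\ref{lem-4} directly, after noting that for $\ell\ge T$ the summand in the second part vanishes identically (because $U_\ell=i$ and the conditional mean equals $\overline d_i^a\,\overline c(i,i)$ by construction), while for $\ell<T$ each summand is bounded below by $-\overline d_i^a\,a(\log a)^{\rho_1}$ (using $\mathbb{E}_i[\cdot\,|\,U_\ell]\ge 0$). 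This decomposition separates the concentration step from the control-policy step more cleanly than your location-by-location bookkeeping, and in particular avoids the union bound over the number of $i$-searches and the separate treatment of the location-$j$ and exploration contributions; but the substance is the same. One small remark: in this lemma $a$ is fixed and only $t\to\infty$, so your phrasing ``this is where $t/\log a\to\infty$ is essential'' is a bit misleading --- the concentration bound is simply exponential in $t$ with an $a$-dependent rate, which beats any polynomial.
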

\begin{proof}
We first that with 
\tdred{$\underline{c} = \frac{\log{a}}{\nu D \left( \mu \| \pi \right)},$ and, hence, 
$\overline{c} (i, u) \geq \underline{c},$ we get that}
\begin{align}
\mathbb{P}_i \left [
\log{\left( \frac{\overline{p}_i \left( Z^t \right)}
{\overline{p}_j \left( Z^t \right)} \right)}
\ <\ 
\left( \sum\limits_{\ell = 1}^t \overline{c} \left( i, U_{\ell} \right) \right) 
\left( \overline{d}^a_i - \epsilon' \right)
\right ] 
\hspace{1.3in}
\nonumber 
\end{align}
\begin{align}
&\ \leq\ 
\mathbb{P}_i \left [
\sum\limits_{\ell = 1}^t
\left (
\log{\left( \frac{\overline{p}_i^{U_{\ell}} \left( Z_{\ell} \right)}
{\overline{p}_j^{U_{\ell}} \left( Z_{\ell} \right)} \right)}
~-~
\mathbb{E}_i \left [
\log{\left( \frac{\overline{p}_i^{U_{\ell}} \left( Z_{\ell} \right)}
{\overline{p}_j^{U_{\ell}} \left( Z_{\ell} \right)} \right)}
\bigg \vert
U_{\ell}
\right ]
\right )
\ <\  
-t \underline{c} \frac{\epsilon'}{2}
\right ]
\nonumber \\
&\ \ \ +\ 
\mathbb{P}_i \left [
\sum\limits_{\ell = 1}^t
\left (
\mathbb{E}_i \left [
\log{\left( \frac{\overline{p}_i^{U_{\ell}} \left( Z_{\ell} \right)}
{\overline{p}_j^{U_{\ell}} \left( Z_{\ell} \right)} \right)}
\bigg \vert
U_{\ell}
\right ]
~-~
\overline{d}_i^a \overline{c} \left(i, U_{\ell} \right)
\right )
\ <\ 
-t \underline{c} \frac{\epsilon'}{2}
\right ].
\label{eqn-pf-Lem-5-1}
\end{align}
The proof that the probability of the first term on the right-side of 
(\ref{eqn-pf-Lem-5-1}) goes to zero exponentially fast in $t$ follows from observing that the sequence
\begin{align}
M_t \ =\ 
\sum\limits_{\ell = 1}^t
\left(
\log{\left( \frac{\overline{p}_i^{U_{\ell}} \left( Z_{\ell} \right)}
{\overline{p}_j^{U_{\ell}} \left( Z_{\ell} \right)} \right)}
~-~
\mathbb{E}_i \left [
\log{\left( \frac{\overline{p}_i^{U_{\ell}} \left( Z_{\ell} \right)}
{\overline{p}_j^{U_{\ell}} \left( Z_{\ell} \right)} \right)}
\bigg \vert
U_{\ell}
\right ]
\right)
\nonumber
\end{align}
is a Martingale and can be carried out by invoking the Chernoff bounding argument similar to the argument leading to Equation (5.10) in \citep{cher-amstat-1959}.
In addition, we note that for $a$ sufficiently large,
\begin{align}
\min\limits_{j \neq i}
\min\limits_{k = 1, \ldots, M}
\mathbb{E}_i \left [
\log{\left( \frac{\overline{p}_i^{U_{\ell}} \left( Z_{\ell} \right)}
{\overline{p}_j^{U_{\ell}} \left( Z_{\ell} \right)} \right)}
\bigg \vert
U_{\ell} = k
\right ]
&\ \geq\ 0
\nonumber \\
\max\limits_{u = 1, \ldots, M} \overline{c} \left(i, u \right)
&\ \leq\  a \log{a}^{\rho_1}.
\label{eqn-pf-Lem-5-2}
\end{align}
Next, for the $T$ in Lemma \ref{lem-4}, we get that 
for all $\ell \geq T$ and (\ref{eqn-Lem-5-pream}), (\ref{eqn-Lem-5-pream-0}) that
\begin{align}
\mathbb{E}_i \left [
\log{\left( \frac{\overline{p}_i^{U_{\ell}} \left( Z_{\ell} \right)}
{\overline{p}_j^{U_{\ell}} \left( Z_{\ell} \right)} \right)}
\bigg \vert
U_{\ell}
\right ]
&\ =\
\mathbb{E}_i \left [
\log{\left( \frac{\overline{p}_i^{U_{\ell}} \left( Z_{\ell} \right)}
{\overline{p}_j^{U_{\ell}} \left( Z_{\ell} \right)} \right)}
\bigg \vert
U_{\ell} = i
\right ]	\nonumber \\
&\ =\ 
\overline{d}_i^a
\left( \frac{\log{a}}{\nu D \left( \mu \| \pi \right)} \right)
\nonumber \\
&\ =\ 
\overline{d}_i^a \overline{c} \left(i, U_{\ell} \right).
\label{eqn-pf-Lem-5-3-new}
\end{align}
Consequently, by selecting $\epsilon$ in Lemma \ref{lem-4} sufficiently small, we have that
\begin{align}
\mathbb{P}_i \left [
\sum\limits_{\ell = 1}^t
\left (
\mathbb{E}_i \left [
\log{\left( \frac{\overline{p}_i^{U_{\ell}} \left( Z_{\ell} \right)}
{\overline{p}_j^{U_{\ell}} \left( Z_{\ell} \right)} \right)}
\bigg \vert
U_{\ell}
\right ]
-
\overline{d}_i^a \overline{c} \left(i, U_{\ell} \right)
\right )
<
-t \underline{c} \frac{\epsilon'}{2};\ 
T \leq \epsilon n
\right ] \ =\ 0.
\label{eqn-pf-Lem-5-3}
\end{align}
For such a small $\epsilon,$ we get that the second term on the right-side of (\ref{eqn-pf-Lem-5-1}) can be upper bounded according to
Lemma \ref{lem-4} as
\begin{align}
\mathbb{P}_i \left [
\sum\limits_{\ell = 1}^t
\left (
\mathbb{E}_i \left [
\log{\left( \frac{\overline{p}_i^{U_{\ell}} \left( Z_{\ell} \right)}
{\overline{p}_j^{U_{\ell}} \left( Z_{\ell} \right)} \right)}
\bigg \vert
U_{\ell}
\right ]
-
\overline{d}_i^a \overline{c} \left(i, U_{\ell} \right)
\right )
\ <\ 
-t \underline{c} \frac{\epsilon'}{2}
\right ].
\hspace{0.8in}
\end{align}
\begin{align}
\ \leq\ \mathbb{P}_i \left [ T > \epsilon t \right ]
\ =\  O \left( t^{-\gamma} \right),
\nonumber
\end{align}
thereby completing the proof of Lemma \ref{lem-5}.
\end{proof}

To prove Theorem \ref{thm-1}, we first shall prove that the stopping and final decision rules in (\ref{eqn-stop-DesBinExp-original}) yield that
\begin{align}
P_{\rm{max}} \ \leq\ 
\frac{M}{e^{a^{\rho_2} \left( \log{a} \right)^{\rho_1}}}.
\label{eqn-pf-Thm1-1}
\end{align}
To this end, we consider two separate cases: when the true hypothesis is a non-null hypothesis, and when the true hypothesis is the null hypothesis.  

First consider the case when the true hypothesis is a non-null hypothesis, say $i \in [M].$  For any $t \geq 1$ and a realization $z^t$ of the binary search results of the search policy in (\ref{eqn-ctrl-explore-DesBinExp}), (\ref{eqn-ctrl-exploit-DesBinExp-nonnull}), 
(\ref{eqn-ctrl-exploit-DesBinExp-0}), as in the paragraph preceding (\ref{eqn-mainalg-ML-estimation}), we let $N(i, 1), N(i, 0),\ i \in [M],$ denote the number of times (up to time $t$) that the $i$-th location are searched and the sequential binary test in (\ref{eqn-time-upperthres-bin-test}), (\ref{eqn-stoptime-bin-test}),  (\ref{eqn-decision-bin-test}) decides that the target is there, and that the target is not there, respectively.  Then, we get from 
(\ref{eqn-reciprocity-mubar-pibar}) that for any other non-null hypothesis $j \in [M], j \neq i,$ it holds that
\begin{align}
\frac{\overline{p}_{j} \left( z^t \right)}
{\overline{p}_i \left( z^t \right)} 
\ =\ 
\left( \frac{\frac{1}{a}}{1-\frac{1}{a}} \right)^{N(j, 0)}
\left( \frac{1-\frac{1}{a}}{\frac{1}{a}} \right)^{N(j, 1)}
\left( \frac{1-\frac{1}{a}}{\frac{1}{a}} \right)^{N(i, 0)}
\left( \frac{\frac{1}{a}}{1-\frac{1}{a}} \right)^{N(i, 1)}.
\label{eqn-pf-Thm1-2}
\end{align}
Now, for each such $j \neq i$, consider a pair of distributions 
$\tilde{\mu}_b, \tilde{\pi}_b$ (which are functions of both $i, j$) defined according to 
\begin{align}
\tilde{\mu}_b(0) 
		&\ =\ \left( 1 - \alpha_a \right) 
		\frac{\frac{1}{a}}{1-\frac{1}{a}}
		\ =\ \left( 1 - \pi_b(1) \right) 
		\frac{\frac{1}{a}}{1-\frac{1}{a}}; 	\nonumber \\
\tilde{\pi}_b(1) 	
		&\ =\ \left( 1 - \beta_a \right) 
		\frac{\frac{1}{a}}{1-\frac{1}{a}}
		\ =\ \left( 1 - \mu_b(0) \right) 
		\frac{\frac{1}{a}}{1-\frac{1}{a}}.
\label{eqn-pf-Thm1-3}		
\end{align}
From (\ref{eqn-more-noisy-cond}), and (\ref{eqn-pf-Thm1-3}), we get by an easy calculation that
\begin{align}
\frac{1-\tilde{\mu}_b(0)}{\pi_b(1)} &\ \geq\  
		\frac{1-\frac{1}{a}}{\frac{1}{a}} \ \geq\ 1; 	\nonumber \\
\frac{1-\tilde{\pi}_b(1)}{\mu_b(0)} &\ \geq\ 
		\frac{1-\frac{1}{a}}{\frac{1}{a}} \ \geq\ 1,
\label{eqn-pf-Thm1-4}		
\end{align}
for $a$ large.
Now consider another probability distribution, $\tilde{p}_j\left( z^t \right),$ defined as a function of both $j$ and $i$, based on the same search policy according to
\begin{align}
\tilde{p}_j^u ~=~\tilde{\mu}_b,\ u = j,\ 
\tilde{p}_j^u ~=~\tilde{\pi}_b,\ u = i,\
\tilde{p}_j^u ~=~ \pi_b,\ u \neq i, j.
\label{eqn-pf-Thm1-5}		
\end{align}
Then, it holds that
\begin{align}
\frac{\tilde{p}_{j} \left( z^t \right)}
{p_i \left( z^t \right)} 
&=\ 
\left( \frac{\tilde{\mu}_b(0)}{1-\pi_b(1)} \right)^{N(j, 0)}
\left( \frac{1-\tilde{\mu}_b(0)}{\pi_b(1)} \right)^{N(j, 1)}	
\nonumber \\
&\ \ \ \times
\left( \frac{1-\tilde{\pi}_b(1)}{\mu_b(0)} \right)^{N(i, 0)}
\left( \frac{\tilde{\pi}_b(1)}{1-\mu_b(0)} \right)^{N(i, 1)}.
\label{eqn-pf-Thm1-6}
\end{align}
Consequently, we get from (\ref{eqn-pf-Thm1-2}), (\ref{eqn-pf-Thm1-6}), (\ref{eqn-pf-Thm1-3}) and (\ref{eqn-pf-Thm1-4}) that
\begin{align}
\frac{\tilde{p}_{j} \left( z^t \right)}
{p_i \left( z^t \right)} 
\ \geq\ 
\frac{\overline{p}_{j} \left( z^t \right)}
{\overline{p}_i \left( z^t \right)}.
\label{eqn-pf-Thm1-7}
\end{align}
Similarly, by observing that
\begin{align}
\frac{\overline{p}_{0} \left( z^t \right)}
{\overline{p}_i \left( z^t \right)} 
\ =\ 
\left( \frac{\frac{1}{a}}{1-\frac{1}{a}} \right)^{N(i, 1)}
\left( \frac{1-\frac{1}{a}}{\frac{1}{a}} \right)^{N(i, 0)},
\label{eqn-pf-Thm1-8}
\end{align}
and define $\tilde{p}_0 \left( z^t \right)$ according to
\begin{align}
\tilde{p}_0^u ~=~\tilde{\pi}_b,\ u = i,\ 
\tilde{p}_0^u ~=~\pi_b,\ u \neq i,
\label{eqn-pf-Thm1-9}		
\end{align}
we get from
(\ref{eqn-pf-Thm1-9}), the second equality of (\ref{eqn-pf-Thm1-3}), the second inequality of (\ref{eqn-pf-Thm1-4}) and (\ref{eqn-pf-Thm1-8}) that
\begin{align}
\frac{\tilde{p}_{0} \left( z^t \right)}
{p_i \left( z^t \right)} 
\ =\ 
\left( \frac{\tilde{\pi}_b(1)}{1-\mu_b(0)} \right)^{N(i, 1)}
\left( \frac{1-\tilde{\pi}_b(1)}{\mu_b(0)} \right)^{N(i, 0)}
\ \geq\ \frac{\overline{p}_{0} \left( z^t \right)}
{\overline{p}_i \left( z^t \right)}.
\label{eqn-pf-Thm1-10}
\end{align}

Hence, under the non-null hypothesis $i,$ the error probability incurred by the rules (\ref{eqn-stop-DesBinExp-original}) can be upper bounded as
\begin{align}
\mathbb{P}_i \left [ \delta \left( Z^{\tau} \right) \neq i \right ]
&\ =\ 
	  \mathop{\sum\limits_{j = 0}^M}_{j \neq i}
	  \sum\limits_{t=1}^{\infty}
	  \mathbb{P}_i \left [ \delta \left( Z^{t} \right) = j,\ \tau = t \right ]
	  \nonumber \\
&\ \leq\ 
	  \mathop{\sum\limits_{j = 0}^M}_{j \neq i}
	  \sum\limits_{t=1}^{\infty}
	  \mathbb{P}_i \left [ 
	  	\frac{\tilde{p}_{j} \left( Z^t \right)}{p_i \left( Z^t \right)} 
		> e^{a^{\rho_2} \left( \log{a} \right)^{\rho_1}},\
		\tau = t 
	  \right ]	\nonumber \\
&\ <\ 
	  \mathop{\sum\limits_{j = 0}^M}_{j \neq i}
	  \frac{
	  \sum\limits_{t=1}^{\infty}
	  \tilde{\mathbb{P}}_j \left [ \tau = t \right ]}
	  {e^{a^{\rho_2} \left( \log{a} \right)^{\rho_1}}}		\nonumber \\
&\leq\ 
	  \frac{M}{e^{a^{\rho_2} \left( \log{a} \right)^{\rho_1}}},
	  \label{eqn-pf-Thm1-11}
\end{align}
where the first inequality above follows from the stopping rule in 
(\ref{eqn-stop-DesBinExp-original}), (\ref{eqn-pf-Thm1-7}) and 
(\ref{eqn-pf-Thm1-10}), and the second inequality follows from \tddred{a} change of measure argument.

The error probability under the null hypothesis can be analyzed in a similar manner.  In particular, for any non-null hypothesis $j \in [M],$
we have that
\begin{align}
\frac{\overline{p}_{j} \left( z^t \right)}
{\overline{p}_0 \left( z^t \right)} 
\ =\ 
\left( \frac{\frac{1}{a}}{1-\frac{1}{a}} \right)^{N(j, 0)}
\left( \frac{1-\frac{1}{a}}{\frac{1}{a}} \right)^{N(j, 1)}.
\label{eqn-pf-Thm1-12}
\end{align}
By defining $\tilde{p}_j \left( z^t \right)$ according to
\begin{align}
	\tilde{p}_j^u = \tilde{\mu}_b,\ u = j,\ 
	\tilde{p}_j^u = \pi_b,\ u \neq j,
\end{align}
we get from the first equality of (\ref{eqn-pf-Thm1-3}), the first inequality \tddred{of (\ref{eqn-pf-Thm1-4}), and} (\ref{eqn-pf-Thm1-12})  that
\begin{align}
\frac{\tilde{p}_{j} \left( z^t \right)}
{p_0 \left( z^t \right)} 
\ =\ 
\left( \frac{\tilde{\mu}_b(0)}{1-\pi_b(1)} \right)^{N(j, 0)}
\left( \frac{1-\tilde{\mu}_b(0)}{\pi_b(1)} \right)^{N(j, 1)}
\ \geq\ \frac{\overline{p}_{j} \left( z^t \right)}
{\overline{p}_0 \left( z^t \right)}.
\label{eqn-pf-Thm1-13}
\end{align}
Using (\ref{eqn-pf-Thm1-13}) and the arguments similar to the one leading to (\ref{eqn-pf-Thm1-11}), we get that
\begin{align}
\mathbb{P}_0 \left [ \delta \left( Z^{\tau} \right) \neq 0 \right ]
\ \leq\ 
 \frac{M}{e^{a^{\rho_2} \left( \log{a} \right)^{\rho_1}}},
	  \label{eqn-pf-Thm1-14}
\end{align} 
thereby, together with (\ref{eqn-pf-Thm1-11}),  yielding (\ref{eqn-pf-Thm1-1}).

Next, for a non-null hypothesis $i = 1, \ldots, M,$ and any other hypothesis $j = 0, 1, \ldots, M,\ j \neq i,$ let $\tau_j$ denote the smallest time for which 
\tdred{$\log{\left( \frac{\overline{p}_i \left( Z^t \right)}{\overline{p}_j \left( Z^t \right)} \right)} > a^{\rho_2} \left( \log{a}\right)^{\rho_1}$} for all $t \geq \tau_j.$  Now for any $\delta  > 0$ and $A > 
\frac{a^{\rho_2} \left( \log{a}\right)^{\rho_1}}{\overline{d}_i^a - \delta},$ where $\overline{d}_i^a$ is as in (\ref{eqn-Lem-5-pream}) and with $\overline{\overline{c}} = a \left( \log{a} \right)^{\rho_1},$ it follows that for $a$ large,
\begin{align}
\mathbb{P}_i \left [ 
\sum\limits_{t=1}^{\tau_j} \tdred{\overline{c} \left( i, U_t \right)} > A
\right ]
&\leq
\mathbb{P}_i \left [
\sum\limits_{t=1}^{\tau_j-1} \tdred{\overline{c} \left( i, U_t \right)} 
> A - \overline{\overline{c}};\ 
\left( \tau_j - 1 \right) \geq \lfloor \frac{A}{\overline{\overline{c}}} \rfloor
\right ]
\nonumber \\
&\leq
\mathbb{P}_i \left [
\sum\limits_{t=1}^{\tau_j-1} \tdred{\overline{c} \left( i, U_t \right)}
> \frac{a^{\rho_2} \left( \log{a}\right)^{\rho_1}}
{\left( \overline{d}_i^a - \delta \right)} 
- \overline{\overline{c}};\ 
\left( \tau_j - 1 \right) \geq \lfloor \frac{A}{\overline{\overline{c}}} \rfloor
\right ]
\nonumber \\
&\leq
\sum\limits_{t = \lfloor \frac{A}{\overline{\overline{c}}} \rfloor}^{\infty}
\mathbb{P}_i \left [
\sum\limits_{\ell=1}^{t} \tdred{\overline{c} \left( i, U_{\ell} \right)} 
> \frac{\log{
\left(\frac{\overline{p}_i \left( Z^{t} \right)}
{\overline{p}_j \left( Z^{t} \right)}\right)}
}
{\left( \overline{d}_i^a - \frac{\delta}{2} \right)} 
\right ]
\label{eqn-pf-Thm1-15} \\
&\leq
\sum\limits_{t = \lfloor \frac{A}{\overline{\overline{c}}} \rfloor}^{\infty}
O \left( t^{- \gamma} \right) 
= 
O \left(
\left( \frac{A}{\overline{\overline{c}}} \right)^{-\gamma + 1} 
\right),
\label{eqn-pf-Thm1-16}
\end{align}
where (\ref{eqn-pf-Thm1-15}) follows from the fact that for $a$ large, $\frac{\overline{\overline{c}}}{a^{\rho_2} \left( \log{a}\right)^{\rho_1}} ~\rightarrow 0,$ and (\ref{eqn-pf-Thm1-16}) follows from Lemma \ref{lem-5}.  Consequently, we get from (\ref{eqn-pf-Thm1-16}) that for any $j \neq i,$ 
\begin{align}
\mathbb{E}_i \left [ 
\sum\limits_{t=1}^{\tau_j} \tdred{\overline{c} \left( i, U_t \right)}
\right ]
&\leq 
\frac{a^{\rho_2} \left( \log{a}\right)^{\rho_1}}{\overline{d}_i^a - \delta}
\left( 1 + 
\int_{\frac{a^{\rho_2} \left( \log{a}\right)^{\rho_1}}{\overline{d}_i^a - \delta}}^{\infty}
O \left( \left( \frac{A}{\overline{\overline{c}}} \right)^{-\gamma + 1} \right) dA
\right)
\nonumber \\
&\leq 
\frac{a^{\rho_2} \left( \log{a}\right)^{\rho_1}}{\overline{d}_i^a - \delta}
\left( 
1 + 
\overline{\overline{c}}
O \left( \left( 
\frac{\frac{a^{\rho_2} \left( \log{a}\right)^{\rho_1}}{\overline{d}_i^a - \delta}}{\overline{\overline{c}}} 
\right)^{-\gamma + 2} 
\right)
\right)
\nonumber \\
&\leq 
\frac{a^{\rho_2} \left( \log{a}\right)^{\rho_1}}{\overline{d}_i^a - \delta}
\left( 
1 + 
a \left( \log{a}\right)^{\rho_1}
O \left( \left( 
\frac{a^{\rho_2-1}}{\overline{d}_i^a - \delta}
\right)^{-\gamma + 2} 
\right)
\right)
\nonumber \\
&=
\frac{a^{\rho_2} \left( \log{a}\right)^{\rho_1}}{\overline{d}_i^a - \delta}
\left( 
1+ o(1)
\right),
\label{eqn-pf-Thm1-17}
\end{align}
for $\gamma$ sufficiently large so that $\left(\gamma-2\right) \left( \rho_2 -1 \right) > 1.$

Lastly, it follows from (\ref{eqn-stop-DesBinExp-original}) that $\tau \leq \max\limits_{j \neq i} \tau_j.$  Consequently, we get from (\ref{eqn-pf-Thm1-17}), (\ref{eqn-Lem-5-pream}) \tdred{by virtue of the fact that $\overline{c} \left(i, u \right) \geq c \left( i, u \right)$ (cf. Lemma \ref{lem-1} and (\ref{eqn-Lem-5-pream-0})),} that
\begin{align}
\mathbb{E}_i \left [ 
\sum\limits_{t=1}^{\tau} c \left( i, U_t \right)
\right ]
&\ \leq\ 
\frac{a^{\rho_2} \left( \log{a}\right)^{\rho_1}}{\overline{d}_i^a - \delta}
\left( 
1+ o(1)
\right)
\nonumber \\
&\ =\ 
\frac{a^{\rho_2} \left( \log{a}\right)^{\rho_1}}
{\nu D\left( \mu \| \pi \right) - \delta}
\left( 
1+ o(1)
\right),
\nonumber 
\end{align}
thereby, together with (\ref{eqn-pf-Thm1-1}), yielding (\ref{eqn-thm-1-assertion}) and, hence,
completing the proof, as $\delta$ and $\nu$ can be arbitrarily close to 0 and 1, respectively.

\section*{Acknowledgements}
This work was supported by the Air Force Office of
Scientific Research (AFOSR) under the Grant
FA9550-10-1-0458 through the University of Illinois at
Urbana-Champaign, by the U.S. Defense Threat Reduction
Agency through subcontract 147755 at the University of
Illinois from prime award HDTRA1-10-1-0086, and by the
National Science Foundation under Grant NSF CCF
11-11342.


\begin{thebibliography}{}

\bibitem{ahls-wege-book-1987}
Ahlswede, R. and Wegener, I. (1987). \textit{Search Problems}.  Chichester: Wiley. \MR{0929026}

\bibitem{alpe-gal-book-2003}
Alpern, S. and Gal, S. (2003).  \textit{The Theory of Search Games and Rendezvous}.  Boston, MA: Kluwer Academic Publishers. \MR{2005053}

\bibitem{benk-mont-weis-navres-1991}
Benkoski, S.J., Monticino, M.G. and Weisinger, J.R. (1991).  A survey of the search theory literature.  \textit{Naval Res. Logistics} \textbf{38} 469--494.

\bibitem{bess-tech-repo-1960}
Bessler, S.A.  (1960).  Theory and applications of the sequential design of experiments, $k$-actions and infinitely many experiments: Part I-Theory.
\textit{Tech. Report} \textbf{55}.  Dept. of Statistics, Stanford University.

\bibitem{cher-amstat-1959}
Chernoff, H. (1959).  Sequential design of experiments.  \textit{Ann. Math. Statist.} \textbf{30} 755--770. \MR{0108874}

\bibitem{cher-berksym-1961}
Chernoff, H. (1961).  Sequential tests for the mean of a normal distribution.  In \textit{Proceedings of the Fourth Berkeley Symposium on Mathematical Statistics and Probability} \textbf{1}  79--91. Berkeley, CA: Univ. of Calif. Press. \MR{0131941}

\bibitem{cher-amstat-1965}
Chernoff, H. (1965).  Sequential tests for the mean of a normal distribution III (Small $t$).  \textit{Ann. Math. Statist.} \textbf{36} 28--54. \MR{0170442}

\bibitem{chud-chud-book-1989}
Chudnovsky, D.V. and Chudnovsky, G.V. (1989).  \textit{Search Theory, Some Recent Developments}.  New York: Marchel Dekker, Inc.

\bibitem{cove-thom-eit-book-2006}
Cover, T.M. and Thomas, J.A. (2006).  \textit{Elements of Information Theory}, 2nd edn.  Hoboken, NJ: Wiley. \MR{2239987}

\bibitem{koop-book-1980}
Koopman, B.O. (1980).  \textit{Search and Screening}.  Elmsford, NY: Pergamon Press, Inc. \MR{0642358}

\bibitem{lai-astat-1988}
Lai, T.L.  (1988). Nearly optimal sequential tests of composite hypotheses.  \textit{Ann. Statist.}  \textbf{16}  856--886.  \MR{0947582}

\bibitem{li-niti-veer-ieeetit-2014}
Li, Y., Nitinawarat, S. and Veeravalli, V.V.  (2014).  Universal outlier hypothesis testing.  \textit{IEEE Trans. Inf. Theory} \textbf{60}  4066--4082. \MR{3225950}


\bibitem{li-niti-veer-asilomar-2014}
Li, Y., Nitinawarat, S. and Veeravalli, V.V.  (2014).  Universal sequential outlier hypothesis testing.  In \textit{Proceedings of the Asilomar Conference on Signals, Systems, and Computers}.  To appear.  Pacific Grove, California: IEEE.  Available at \mbox{http://arxiv.org/}.


\bibitem{nagh-javi-astat-2013}
Naghshvar, M. and Javidi, T.  (2013).  Active sequential hypothesis testing.  \textit{Ann. Statist.}  \textbf{41}  2703--2738. \MR{3161445}

\bibitem{niti-atia-veer-ieeetac-2013} 
Nitinawarat, S., Atia, G. and Veeravalli, V.V. (2013).  Controlled sensing for multihypothesis testing.  \textit{IEEE Trans. Autom. Contr.} \textbf{58} 2451--2464. \MR{MR3106054}

\bibitem{niti-veer-sqa-2014}
Nitinawarat, S. and Veeravalli, V.V. (2015).  Controlled sensing for sequential multihypothesis testing with controlled Markovian observations and non-uniform control cost.  \textit{Sequential Analysis}.  To appear.  Available at \mbox{http://arxiv.org/}.

\bibitem{schw-amstat-1962}
Schwarz, G.  (1962).  Asymptotic shapes of Bayes sequential testing regions.  \textit{Ann. Math. Statist.} \textbf{33} 224--236. \MR{0137226}.

\bibitem{ston-book-2004}
Stone, L.D.  (2007).  \textit{Theory of Optimal Search}. 2nd edn. Topics in Operations Research Series. Hanover, MD: INFORMS.

\end{thebibliography}

\end{document}